\providecommand \@dotsep{5} \def\listtodoname{List of Todos} \def\listoftodos{\@starttoc{tdo}\listtodoname} \makeatother 
\patchcmd{\@startsection}{\@afterindenttrue}{\@afterindentfalse}{}{}             
\patchcmd{\part}{\bfseries}{\bfseries\LARGE}{}{}
\patchcmd{\section}{\scshape}{\bfseries}{}{}\renewcommand{\@secnumfont}{\bfseries} 
\patchcmd{\@settitle}{\uppercasenonmath\@title}{\large}{}{}
\patchcmd{\@setauthors}{\MakeUppercase}{}{}{}
\theoremstyle{plain}
\newtheorem{thm}{Theorem}[section] 
\newaliascnt{lemma}{thm}\newtheorem{lemma}[lemma]{Lemma}\aliascntresetthe{lemma}
\newaliascnt{cor}{thm}\newtheorem{cor}[cor]{Corollary}\aliascntresetthe{cor}
\newaliascnt{prop}{thm}\newtheorem{prop}[prop]{Proposition}\aliascntresetthe{prop}
\newtheorem*{claim}{Claim}
\newtheorem*{thm*}{Theorem}
\newtheorem*{lem*}{Lemma}
\newtheorem*{cor*}{Corollary}
\theoremstyle{definition}
\newaliascnt{df}{thm}\newtheorem{df}[df]{Definition}\aliascntresetthe{df}
\newaliascnt{rem}{thm}\newtheorem{rem}[rem]{Remark}\aliascntresetthe{rem}
\newaliascnt{ex}{thm}\newtheorem{ex}[ex]{Example}\aliascntresetthe{ex}
\newtheorem*{df*}{Definition}
\newtheorem*{ex*}{Example}
\newtheorem*{rem*}{Remark}
\theoremstyle{remark}
\DeclareRobustCommand{\gobblefour}[5]{}    
\DeclareFontFamily{OT1}{pzc}{}                                
\DeclareFontShape{OT1}{pzc}{m}{it}{<-> s * [1.10] pzcmi7t}{}
\DeclareMathAlphabet{\mathpzc}{OT1}{pzc}{m}{it}
\DeclareSymbolFont{sfoperators}{OT1}{bch}{m}{n} \DeclareSymbolFontAlphabet{\mathsf}{sfoperators} \makeatletter\def\operator@font{\mathgroup\symsfoperators}\makeatother 
\DeclareSymbolFont{cmletters}{OML}{cmm}{m}{it}              
\DeclareSymbolFont{cmsymbols}{OMS}{cmsy}{m}{n}
\DeclareSymbolFont{cmlargesymbols}{OMX}{cmex}{m}{n}
\DeclareMathSymbol{\myjmath}{\mathord}{cmletters}{"7C}     \let\jmath\myjmath 
\DeclareMathSymbol{\myamalg}{\mathbin}{cmsymbols}{"71}     
\DeclareMathSymbol{\mycoprod}{\mathop}{cmlargesymbols}{"60}
\DeclareMathSymbol{\myalpha}{\mathord}{cmletters}{"0B}     \let\alpha\myalpha 
\DeclareMathSymbol{\mybeta}{\mathord}{cmletters}{"0C}      \let\beta\mybeta
\DeclareMathSymbol{\mygamma}{\mathord}{cmletters}{"0D}     \let\gamma\mygamma
\DeclareMathSymbol{\mydelta}{\mathord}{cmletters}{"0E}     \let\delta\mydelta
\DeclareMathSymbol{\myepsilon}{\mathord}{cmletters}{"0F}   \let\epsilon\myepsilon
\DeclareMathSymbol{\myzeta}{\mathord}{cmletters}{"10}      \let\zeta\myzeta
\DeclareMathSymbol{\myeta}{\mathord}{cmletters}{"11}       \let\eta\myeta
\DeclareMathSymbol{\mytheta}{\mathord}{cmletters}{"12}     \let\theta\mytheta
\DeclareMathSymbol{\myiota}{\mathord}{cmletters}{"13}      \let\iota\myiota
\DeclareMathSymbol{\mykappa}{\mathord}{cmletters}{"14}     \let\kappa\mykappa
\DeclareMathSymbol{\mylambda}{\mathord}{cmletters}{"15}    \let\lambda\mylambda
\DeclareMathSymbol{\mymu}{\mathord}{cmletters}{"16}        \let\mu\mymu
\DeclareMathSymbol{\mynu}{\mathord}{cmletters}{"17}        \let\nu\mynu
\DeclareMathSymbol{\myxi}{\mathord}{cmletters}{"18}        \let\xi\myxi
\DeclareMathSymbol{\mypi}{\mathord}{cmletters}{"19}        \let\pi\mypi
\DeclareMathSymbol{\myrho}{\mathord}{cmletters}{"1A}       \let\rho\myrho
\DeclareMathSymbol{\mysigma}{\mathord}{cmletters}{"1B}     \let\sigma\mysigma
\DeclareMathSymbol{\mytau}{\mathord}{cmletters}{"1C}       \let\tau\mytau
\DeclareMathSymbol{\myupsilon}{\mathord}{cmletters}{"1D}   \let\upsilon\myupsilon
\DeclareMathSymbol{\myphi}{\mathord}{cmletters}{"1E}       \let\phi\myphi
\DeclareMathSymbol{\mychi}{\mathord}{cmletters}{"1F}       \let\chi\mychi
\DeclareMathSymbol{\mypsi}{\mathord}{cmletters}{"20}       \let\psi\mypsi
\DeclareMathSymbol{\myomega}{\mathord}{cmletters}{"21}     \let\omega\myomega
\DeclareMathSymbol{\myvarepsilon}{\mathord}{cmletters}{"22}\let\varepsilon\myvarepsilon
\DeclareMathSymbol{\myvartheta}{\mathord}{cmletters}{"23}  \let\vartheta\myvartheta
\DeclareMathSymbol{\myvarpi}{\mathord}{cmletters}{"24}     \let\varpi\myvarpi
\DeclareMathSymbol{\myvarrho}{\mathord}{cmletters}{"25}    \let\varrho\myvarrho
\DeclareMathSymbol{\myvarsigma}{\mathord}{cmletters}{"26}  \let\varsigma\myvarsigma
\DeclareMathSymbol{\myvarphi}{\mathord}{cmletters}{"27}    \let\varphi\myvarphi
\DeclareMathOperator{\hypersum}{\,\raisebox{-2.2pt}{\larger[2]{$\boxplus$}}\,}
\newcommand\N{{\mathbb N}}
\newcommand\NN{{\mathbb N}}
\renewcommand\P{{\mathbb P}}
\newcommand\R{{\mathbb R}}
\renewcommand\S{{\mathbb S}}
\newcommand\U{{\mathbb U}}
\newcommand\Z{{\mathbb Z}}
\newcommand\cP{{\mathcal P}}
\renewcommand\geq{\geqslant}
\renewcommand\leq{\leqslant}
\newcommand{\hyperplus}{\mathrel{\,\raisebox{-1.1pt}{\larger[-0]{$\boxplus$}}\,}}
\renewcommand\emptyset\varnothing
\newcommand\tz{\texorpdfstring{\textbf{Tianyi:} }{Tianyi: }}
\newcommand\mb{\texorpdfstring{\textbf{Matt:} }{Matt: }}
\title{Fusion rules for pastures and tracts}
\author{Matthew Baker}
\address{\rm School of Mathematics, Georgia Institute of Technology, Atlanta, USA}
\email{mbaker@math.gatech.edu}
\author{Tianyi Zhang}
\address{\rm School of Mathematics, Georgia Institute of Technology, Atlanta, USA}
\email{kafuka@gatech.edu}
\begin{document}

\begin{abstract}
Baker and Bowler defined a category of algebraic objects called tracts which generalize both partial fields and hyperfields. They also defined a notion of weak and strong matroids over a tract $F$, and proved that if $F$ is perfect, meaning that $F$-vectors and $F$-covectors are orthogonal for every matroid over $F$, then the notions of weak and strong $F$-matroids coincide. We define the class of strongly fused tracts and prove that such tracts are perfect. 
We also show that both partial fields and stringent hyperfields are strongly fused; in this way, our criterion for perfection generalizes results of Baker-Bowler and Bowler-Pendavingh. 
\end{abstract}


\maketitle
{\ \vspace{-515pt}\\ \flushright\tiny\em\ \today\\}\vspace{480pt}



\section{Introduction}
\label{introduction}

\subsection{Overview}

In \cite{Baker-Bowler19}, Baker and Bowler define a category of algebraic objects called {\bf tracts} which generalize both partial fields and hyperfields (in particular, they generalize fields). Given a tract $F$, Baker and Bowler define the notions of {\bf weak} and {\bf strong} matroids over $F$, and they prove that if $F$ is {\bf perfect} (meaning that $F$-vectors and $F$-covectors are orthogonal for every $F$-matroid) then the notions of weak and strong $F$-matroids coincide. The authors also show that partial fields and doubly distributive hyperfields are always perfect.

\medskip

The fact that doubly distributive hyperfields are perfect was generalized by Bowler--Pendavingh \cite{Bowler-Pendavingh19} and Bowler--Su \cite{Bowler-Su20}, who showed that every {\bf stringent} hyperfield\footnote{In fact, Bowler and Pendavingh work in the more general context of not necessarily multiplicatively commutative {\bf skew hyperfields}, but for simplicity we restrict to the commutative case in this paper.} is perfect and every doubly distributive hyperfield is stringent. 

\medskip

In this paper, we define the class of {\bf strongly fused} tracts and prove that such tracts are perfect. We also show that both partial fields and stringent hyperfields are strongly fused, so our criterion for perfection generalizes results from \cite{Baker-Bowler19, Bowler-Pendavingh19,Bowler-Su20}. 

\medskip

The proof of our main theorem (strongly fused tracts are perfect) is heavily influenced by the paper \cite{Dress-Wenzel92} of Dress and Wenzel, though the details differ in a number of places.

\medskip

We now explain our results in more detail, deferring proofs of the main propositions and theorems to the later sections.

\subsection{Pastures and tracts}

Given an abelian group $G$, let $\NN[G]$ denote the group semiring associated to $G$.
For $\alpha \in \NN[G]$, let $\| \alpha \|$ be the unique natural number $m$ such that $\alpha$ is a sum of $m$ elements of $G$ (with $\| 0 \| = 0$). Thus $\| \alpha \| = 1$ iff $\alpha \in G$, and we have $\| \alpha \cdot \beta \| = \| \alpha \| \cdot \| \beta \|$ and $\| \alpha + \beta \| = \| \alpha \| + \| \beta \|$ for all $\alpha,\beta \in \NN[G]$. For $m \in \NN$, let $\NN[G]_{\leq m}$ denote the set of all $\alpha \in \NN[G]$ with $\| \alpha \| \leq m$.

\begin{df}
A {\bf tract} (resp. {\bf pasture}) is a multiplicatively written commutative monoid $F$ with an absorbing element $0$ such that $F^\times := F \setminus \{ 0 \}$ is a group, together with a subset $N_F$ of $\NN[F^\times]$ (resp. $\NN[F^\times]_{\leq 3}$) satisfying:
\begin{itemize}
\item[(T1)] The zero element of $\NN[F^\times]$ belongs to $N_F$.
\item[(T2)] There is a unique element $\epsilon$ of $F^\times$ with $1 + \epsilon \in N_F$.
\item[(T3)] $N_F$ is closed under the natural action of $F^\times$ on $\NN[F^\times]$.
\end{itemize}
\end{df}  

We call $N_F$ the {\bf null set} of $F$.

We will write $-1$ instead of $\epsilon$, $-x$ instead of $\epsilon x$, and $x - y$ instead of $x + \epsilon y$ for $x,y \in F$. 
(Note, however, that $x - x$ is not equal to $0$ in $\NN[F^\times]$, we merely have $x - x \in N_F$.)


\medskip

Roughly speaking, tracts are the natural algebraic setting for considerations involving strong matroids and pastures are the natural setting for considerations involving weak matroids. Since we're interested in conditions such as perfection which guarantee that every weak matroid is strong, it is natural to explore the relationship between pastures and tracts.

\medskip

\begin{df}
A {\bf morphism} of tracts (or pastures) is a map $\varphi : F \to F'$ such that $\varphi(0)=0$, $\varphi$ induces a group homomorphism from $F^\times$ to $(F')^\times$, and $\varphi(N_F) \subseteq N_{F'}$.
\end{df}

\subsection{Partial fields and hyperfields}

Given a pair $(G,R)$ consisting of a commutative ring $R$ with $1$ and a subgroup $G \leq R^\times$ containing $-1$, we can associate a pasture $P = P_{G,R}$ with $(G,R)$ by setting $P^\times = G$ and declaring that $x+y+z \in N_P$ if and only if $x+y+z=0$ in $P$.
Pastures of this form are called {\bf partial fields}.

\medskip

Roughly speaking, a {\bf hyperfield} is an algebraic structure which behaves like a field except that addition is allowed to be multivalued. More precisely, a hyperfield $H$ consists of a multiplicative monoid with an absorbing element $0$ such that $H^\times = H \setminus \{ 0 \}$ is an abelian group, an involution $x \mapsto -x$ fixing $0$, and a commutative {\bf hyperoperation} which associates to each pair of elements $a,b \in H$ a non-empty subset $a \boxplus b$ of $H$.
The multiplication and hyperaddition are required to satisfy a number of axioms including commutativity and distributivity, and we require for each $a,b \in H$ that $0 \in a \boxplus b$ if and only if $a = -b$. There is also a {\bf reversibility} axiom which says that $c\in a\hyperplus b$ if and only if 
$b \in c \boxplus (-a)$.

\begin{df}
The tract $F_H$ (resp. pasture $P_H$) associated to a hyperfield $H$ has multiplicative group $H^\times$ and null set defined by $\sum_{i=1}^k x_i \in N_H$ iff $0 \in \boxplus_{i=1}^k x_i$ (resp. $x+y+z \in N_H$ if and only if $0 \in x\boxplus y \boxplus z$).
\end{df}


\medskip

If $P$ is a pasture and we set $x \hyperplus y = \{ z \in P \; : \; x+y -z \in N_P \}$, the pasture $P$ corresponds to a field if and only if $\hyperplus$ is an associative binary operation. 
Moreover, $P=P_H$ for some hyperfield $H$ if and only if $x \hyperplus y$ contains {\bf at least one} element for all $x,y \in P$ and $\hyperplus$ is associative (in the sense of set-wise addition),
and $P=P_{(G,R)}$ for some partial field $(G,R)$ if and only if $x \hyperplus y$ contains {\bf at most one} element for all $x,y \in P$ and satisfies a suitable associative law (which is a bit complicated to state, cf.~\cite[Section 2.2]{Pendavingh-vanZwam10b}).
Pastures thus generalize (and simplify) both hyperfields and partial fields by imposing no conditions on the size of the sets $x \hyperplus y$ and no associativity conditions.

\begin{df}
A hyperfield $H$ is {\bf stringent} if $|a \boxplus b| = 1$ for all $a,b \in H$ with $a \neq -b$.
\end{df}

\begin{df}
A hyperfield $H$ is {\bf doubly-distributive} if $(a\hyperplus b)(c\hyperplus d) \ = \ ac \hyperplus bc \hyperplus ad \hyperplus bd$ for all $a,b,c,d\in H$.
\end{df}

Here are a few examples of hyperfields and their associated tracts:

\begin{ex}[Sign hyperfield]
The sign hyperfield $\S$ consists of the multiplicative monoid $\{0,\pm 1\}$, together with the hyperaddition rule given by $1\hyperplus1 = 1$, $(-1)\hyperplus(-1) = -1$, and $(-1)\hyperplus(-1) = \{-1,0,1\}$. 
As a tract, $N_\S$ consists of 0 and all formal sums $\sum x_i$ with at least one $1$ and one $-1$. 
\end{ex}

\begin{ex}[$\S \times \S$]
Products exist in both the category of hyperfields and the category of tracts. As a multiplicative monoid, $\S\times \S$ is given by the Cartesian product of $\{0,\pm 1\}$ with itself, while $N_{\S\times\S}$ consists of 0 and all formal sums $\sum (x_i,y_i)$ such that both $\sum x_i$ and $\sum y_i$ are in $N_\S$.
\end{ex}

\begin{ex}[Phase hyperfield]
The phase hyperfield $\P$ consists of the multiplicative monoid $\{0\}\cup \S^1$, where $\S^1$ is the complex unit circle, together with the following hyperaddition rule. Given $x_i \in \P$, the hypersum $\hypersum_{i=1}^n x_i$ is the set of phases of all complex numbers in the cone 
$\sum_{i=1}^n c_ix_i$ with $c_i\in \R_{>0}$. As a tract, $\sum_{i=1}^n x_i \in N_\P$ if and only if there exist $c_1,\ldots, c_n \in \R_{>0}$ such that $\sum_{i=1}^n c_i\cdot x_i = 0$ in ${\mathbb C}$.  
\end{ex}

\subsection{The fusion axiom}

Although one can trivially extend a pasture to a tract via the inclusion $\NN[F^\times]_{\leq 3} \subset \NN[F^\times]$, this way of viewing pastures as tracts is not very useful in practice.
Instead, it is more useful to define the tract associated to a pasture by inductively ``fusing'' together additive relations of smaller degree to generate higher-degree relations.
More precisely, consider the following {\bf fusion axiom}:
\begin{itemize}
\item[(F)] If $\alpha + z$ and $\beta - z$ are in $N_F$ with $\alpha, \beta \in \NN[F^\times]$ and $z \in F$, then $\alpha + \beta \in N_F$.
\end{itemize}  

Given a pasture $P$, let $\overline{P}$ be the tract whose multiplicative group is $P^\times$ and whose null set is the smallest subset of $\NN[P^\times]$ containing $N_P$ and satisfying the fusion axiom.

The proof of the following result is left as an exercise:

\begin{prop}
\label{prop:FFPT}
The map $P \mapsto \overline{P}$ defines a fully faithful functor from pastures to tracts. A tract $F$ is equal to $\overline{P}$ for some pasture $P$ if and only if $F$ satisfies the fusion axiom (F) and every $\gamma \in N_F \cap \N[F^\times]_{\geq 4}$ can be written as $\alpha + \beta$ for some $\alpha,\beta \in \N[F^\times]_{\geq 2} $ and $z \in F$ with $\alpha + z$ and $\beta - z$ in $N_F$.
\end{prop}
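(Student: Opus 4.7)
The plan is to reduce every claim in the proposition to a single key lemma:
\[
N_{\overline{P}} \cap \N[P^\times]_{\leq 3} \ = \ N_P,
\]
i.e., fusion closure creates no new null relations of degree $\leq 3$. Once this is in hand, the verification that $\overline{P}$ is itself a tract is immediate: (T1) and (T3) are preserved by fusion closure by inspection, and (T2) is precisely the degree-$2$ instance of the key lemma---the unique $\epsilon$ witnessing (T2) for $P$ remains the unique such element in $\overline{P}$.

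For the key lemma, I would write $N_{\overline{P}} = \bigcup_n N_P^{(n)}$ with $N_P^{(0)} = N_P$ and $N_P^{(n+1)}$ obtained from $N_P^{(n)}$ by a single fusion step (and closing under (T1), (T3)), then induct on $n$. A fusion step provides $\alpha + z,\ \beta - z \in N_P^{(n)}$ and produces $\alpha + \beta \in N_P^{(n+1)}$; I need to show that if $\|\alpha + \beta\| \leq 3$ then $\alpha + \beta$ already lies in $N_P$. The decisive case is $\|\alpha\| = 1$, say $\alpha = a$: then $a + z$ is a two-term null, which by (T2)+(T3) forces $z = -a$, whence $\beta - z = \beta + a = \alpha + \beta$ is already in $N_P^{(n)}$ and the inductive hypothesis applies. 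The remaining subcases $\|\alpha\| = 0$ or $\|\beta\| \leq 1$ are symmetric or degenerate. The main obstacle I anticipate is maintaining the invariant that no singleton $\{a\}$ ever enters $N_P^{(n)}$: without it, (T2)+(T3) would not force $z=-a$ in the argument above, so some bookkeeping is needed to rule out those cases.

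With the key lemma in place, functoriality and faithfulness are routine. A pasture morphism $\varphi : P \to P'$ extends to $\overline{\varphi} : \overline{P} \to \overline{P'}$ by induction on fusion stage, because $N_{\overline{P'}}$ contains $\varphi(N_P) \subseteq N_{P'}$ and is closed under fusion; faithfulness is automatic since any such morphism is determined on units. For fullness, a tract morphism $\psi : \overline{P} \to \overline{P'}$ satisfies $\psi(N_P) \subseteq N_{\overline{P'}}$ and preserves degree, so $\psi(N_P) \subseteq N_{\overline{P'}} \cap \N[(P')^\times]_{\leq 3} = N_{P'}$ by the key lemma applied to $P'$, giving a pasture morphism $\psi|_P : P \to P'$ whose fusion-extension is $\psi$.

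For the essential image characterization: $F = \overline{P}$ manifestly satisfies (F) by construction, and the decomposition property follows by an induction on fusion stage identical to that in the key lemma---any step with $\|\alpha\| \leq 1$ merely reproduces an existing null relation, so every degree-$\geq 4$ element of $N_F$ ultimately arises from a step with $\|\alpha\|,\|\beta\|\geq 2$. Conversely, given $F$ satisfying (F) together with the decomposition hypothesis, I would set $P^\times = F^\times$ and $N_P = N_F \cap \N[F^\times]_{\leq 3}$, verify that $P$ is a pasture, and show $\overline{P} \cong F$ (as the identity on units) by a degree induction: $N_{\overline{P}} \subseteq N_F$ because $N_F \supseteq N_P$ is itself closed under (F), while for the reverse, any $\gamma \in N_F$ with $\|\gamma\| \geq 4$ decomposes as $\gamma = \alpha + \beta$ with $\alpha + z,\ \beta - z \in N_F$ of strictly smaller degree (indeed $\|\alpha + z\| \leq \|\alpha\| + 1 \leq \|\gamma\| - 1$), hence both in $N_{\overline{P}}$ by induction, and a single application of fusion delivers $\gamma \in N_{\overline{P}}$.
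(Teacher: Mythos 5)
The paper offers no proof of this proposition---it is explicitly ``left as an exercise'' (an abandoned partial argument survives only in a commented-out block of the source)---so there is no official route to compare against; your proposal has to stand on its own, and it does. The reduction to the key lemma $N_{\overline{P}} \cap \N[P^\times]_{\leq 3} = N_P$ is the right organizing idea: it simultaneously gives (T2) for $\overline{P}$, fullness (via degree preservation of tract morphisms), and the ``first stage at which $\gamma$ appears'' argument for the forward direction of the essential-image characterization. Your case analysis of a fusion step with $\|\alpha+\beta\|\leq 3$ is correct, and the one obstacle you flag---that no singleton of $F^\times$ may enter any $N_P^{(n)}$---is genuinely needed but easily discharged: a fusion output of norm $1$ forces either $\alpha+z$ or $\beta-z$ to be a singleton already in $N_P^{(n)}$, so the invariant propagates from stage $0$. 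At stage $0$ it holds because of the standard tract axiom $1 \notin N_F$ (equivalently $N_F \cap F^\times = \emptyset$); note that the paper's Definition of tract omits this axiom as written, though it is present in Baker--Bowler's original definition and is clearly intended here (without it the proposition is false for degenerate ``pastures'' with $P^\times \subseteq N_P$). The remaining pieces---functoriality via the fusion-closedness of $\varphi_*^{-1}(N_{\overline{P'}})$, faithfulness from morphisms being determined on units, and the converse direction by induction on $\|\gamma\|$ using $\|\alpha+z\| \leq \|\gamma\|-1$---are all sound. This is a complete and correct solution to the exercise.
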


In particular, there is no harm in identifying a pasture $P$ with the corresponding tract $\overline{P}$. 

\medskip

Note that the tract $\overline{P}$ associated to a pasture $P$ is in fact an {\bf idyll} (cf.~ \cite[Section 1.2.2]{Baker-Lorscheid18}), meaning that $N_F$ is an ideal in the semiring $\NN[F^\times]$; this is equivalent to requiring:
\begin{itemize}
\item[(I)] If $\alpha, \beta \in N_F$ then $\alpha + \beta \in N_F$.
\end{itemize}  

\medskip

For hyperfields, we have the following pleasant correspondence (which was in fact our motivation for the fusion axiom):

\begin{prop}
\label{prop:HAP}
If $H$ is a hyperfield and $F_H$ (resp. $P_H$) is the associated tract (resp. pasture) then $F_H = \overline{P_H}$.
\end{prop}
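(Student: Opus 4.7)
The equality $F_H = \overline{P_H}$ amounts to the equality $N_{F_H} = N_{\overline{P_H}}$ of subsets of $\NN[H^\times]$, and I would establish the two inclusions separately.

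For $N_{\overline{P_H}} \subseteq N_{F_H}$, the defining minimality of $N_{\overline{P_H}}$ reduces the task to verifying two things about $N_{F_H}$: that it contains $N_{P_H}$ (immediate, since a $3$-term $P_H$-relation is defined by exactly the same hyperaddition condition that appears in the definition of $N_{F_H}$), and that it satisfies the fusion axiom (F). To check (F), suppose $\alpha + z$ and $\beta - z$ lie in $N_{F_H}$ with $\alpha = x_1 + \cdots + x_m$, $\beta = y_1 + \cdots + y_n$, and $z \in F_H$. Unpacking and applying reversibility gives $-z \in x_1 \boxplus \cdots \boxplus x_m$ and $z \in y_1 \boxplus \cdots \boxplus y_n$ in $H$. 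Since $0 \in (-z) \boxplus z$, associativity of hyperaddition forces $0 \in x_1 \boxplus \cdots \boxplus x_m \boxplus y_1 \boxplus \cdots \boxplus y_n$, i.e.\ $\alpha + \beta \in N_{F_H}$.

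For the reverse inclusion I would proceed by strong induction on $k = \|\gamma\|$, where $\gamma = x_1 + \cdots + x_k \in N_{F_H}$. The cases $k \leq 3$ are immediate: $k=0$ is (T1); $k=1$ is vacuous since $x_1 \neq 0$; $k=2$ forces $x_2 = -x_1$, so $\gamma = x_1 - x_1 \in N_{P_H}$; and $k=3$ is the definition of $N_{P_H}$. For $k \geq 4$, associativity of hyperaddition lets us rewrite $0 \in x_1 \boxplus \cdots \boxplus x_k$ as $0 \in (x_1 \boxplus x_2) \boxplus (x_3 \boxplus \cdots \boxplus x_k)$, producing some $u \in x_1 \boxplus x_2$ with $-u \in x_3 \boxplus \cdots \boxplus x_k$. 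If $u \neq 0$ then $u \in H^\times$, so $x_1 + x_2 - u \in N_{P_H}$ is a genuine $3$-term pasture relation, while $x_3 + \cdots + x_k + u$ is a length-$(k-1)$ element of $N_{F_H}$ and therefore of $N_{\overline{P_H}}$ by induction; applying (F) with $z = -u$ then yields $\gamma \in N_{\overline{P_H}}$. If $u = 0$, then $x_1 + x_2 \in N_{P_H}$ and $x_3 + \cdots + x_k \in N_{\overline{P_H}}$ by induction, and the two combine via axiom (I), which holds in $\overline{P_H}$ because (F) applied with $z = 0$ gives $\alpha + \beta \in N_{\overline{P_H}}$ whenever $\alpha, \beta \in N_{\overline{P_H}}$.

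The only step requiring real care is the inductive splitting: the intermediate element $u$ produced by associativity may vanish, in which case it cannot play the role of $z$ in (F) and the two shorter relations must instead be combined through axiom (I). Everything else is a routine translation between the language of hyperaddition in $H$ and formal sums in $\NN[H^\times]$.
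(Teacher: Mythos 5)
Your proposal is correct and follows essentially the same route as the paper: the forward inclusion via minimality after checking that $N_{F_H}$ contains $N_{P_H}$ and satisfies (F) (the same reversibility argument as the paper's Lemma~\ref{lem:HF}), and the reverse inclusion by induction on $\|\gamma\|$ using associativity to split off a two-element hypersum and then fuse. Your separate treatment of the case $u=0$ is fine but not strictly necessary, since (F) permits $z=0$ and then specializes exactly to the idyll property (I), which is how the paper absorbs that case.
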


For later reference, we also define a functor from tracts to pastures: given a tract $F$, define  
the {\bf 3-term truncation of $F$} to be the pasture whose multiplicative group is $F^\times$ and whose null set is $N_F \cap \NN[F^\times]_{\leq 3}$.

\subsection{The strong fusion axiom}

The present paper is motivated by the observation that many tracts of interest, such as partial fields and stringent hyperfields, satisfy a property that is stronger than (F) and which turns out to be sufficient to guarantee perfection.

\medskip

More precisely, consider the following {\bf strong fusion axiom}:
\begin{itemize}
\item[(SF)] If $\alpha + \gamma$ and $\beta - \gamma$ are in $N_F$ with $\alpha, \beta, \gamma \in \NN[F^\times]$ and either $\gamma = 0$ or $\gamma \not\in N_F$, then $\alpha + \beta \in N_F$.
\end{itemize}  

Note that the fusion axiom is precisely the case where $\gamma \in F$, and in particular a tract satisfying (SF) (which we call a {\bf strongly fused} tract) automatically satisfies (F).

\medskip

The main result of this paper is:

\begin{thm} \label{thm:mainthm}
Every strongly fused tract is perfect.
\end{thm}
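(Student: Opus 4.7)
My plan is to follow the strategy of Dress--Wenzel: prove perfection by inductively generating orthogonality relations of arbitrary arity from the $3$-term orthogonality axioms that define a weak matroid, using the strong fusion axiom to glue intermediate relations together.

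First I would recast perfection as the statement that, for any weak $F$-matroid $M$ on a finite ground set $E$, every $F$-vector $V$ and every $F$-covector $C$ of $M$ satisfy $V \cdot C := \sum_{e \in E} V(e) C(e) \in N_F$. The base case, when $|\supp(V) \cap \supp(C)| \leq 3$, follows directly from the $3$-term orthogonality / modular elimination axioms defining a weak $F$-matroid. The induction is on $n = |\supp(V) \cap \supp(C)|$: at each step I would derive the $n$-term orthogonality relation from relations of strictly smaller arity, obtained from the inductive hypothesis applied either to matroid minors or to $F$-vectors/$F$-covectors with smaller common support (themselves generated from circuits by iterated modular elimination).

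The heart of the inductive step is to split the intersecting support into two parts, writing $V \cdot C = \alpha + \beta$ with $\alpha, \beta \in \NN[F^\times]$, and to construct an auxiliary sum $\gamma \in \NN[F^\times]$ for which $\alpha + \gamma \in N_F$ and $\beta - \gamma \in N_F$. Axiom (SF) then delivers $\alpha + \beta = V \cdot C \in N_F$. The decisive point is that $\gamma$ will typically be a partial sum of several monomials — representing the ``bookkeeping'' terms required to complete $\alpha$ and $\beta$ into genuine $3$-term-style relations, arising from an intermediate circuit or cocircuit — rather than a single element of $F^\times$. Consequently the ordinary fusion axiom (F) is insufficient, and only (SF) can close the argument.

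The main obstacle will be the degenerate case in which the natural choice of $\gamma$ happens to lie in $N_F$, since then (SF) does not apply. I expect to handle this either by changing the decomposition of $\supp(V) \cap \supp(C)$ so that the resulting $\gamma$ is non-null, or by arguing that $\gamma \in N_F$ already forces $\alpha$ and $\beta$ to be null individually, in which case closure of $N_F$ under addition — a consequence of (SF) specialized to $\gamma = 0$, i.e.\ the idyll axiom (I) — finishes the job. Working out this case analysis cleanly, translating the fuzzy-ring arguments of \cite{Dress-Wenzel92} into the tract-theoretic language, is where the bulk of the technical effort should lie.
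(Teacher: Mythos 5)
Your outline points in the right general direction (Dress--Wenzel style, fuse partial relations via (SF), worry about the null-$\gamma$ degeneracy), but at the two places where the real work happens it asserts things that are not available. First, the base case is not ``direct'': orthogonality of a vector and a covector whose inner product has at most three terms is not one of the matroid axioms, and the paper has to prove it (Proposition~\ref{lower-term}) using Lemma~\ref{supp} (existence of a circuit through a non-null coordinate inside the support of a vector), passage to a three-element minor, the classification of loopless, coloopless matroids on three elements as $U_{1,3}$ or $U_{2,3}$, and Baker--Bowler's characterization of $F$-circuits as vectors of minimal support. Second, and more seriously, the inductive step as you describe it --- ``write $V\cdot C=\alpha+\beta$ and construct one auxiliary $\gamma$ with $\alpha+\gamma,\ \beta-\gamma\in N_F$'' --- is precisely the step no one knows how to carry out in one stroke, and your fallback for the degenerate case ($\gamma\in N_F$ forces $\alpha,\beta\in N_F$ separately) is false in general. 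What the paper actually does is: take a minimal counterexample in $|E|$; prove via an elimination operation on covectors, $X\wedge_e Y$ (Proposition~\ref{wedge}), together with a strengthened fusion statement in which $\gamma$ is replaced by $\beta\gamma$ (Proposition~\ref{MSF'}, itself proved by induction on $\|\beta\|$), that the restrictions $X|_{E\setminus e}$ and $Y|_{E\setminus e}$ are (generalized) vectors and covectors of \emph{both} $M\setminus e$ and $M/e$; apply perfection of the proper minors to get all partial sums of $X\cdot Y$ omitting one or two coordinates into $N_F$; and then invoke an assembly lemma (Proposition~\ref{sum'}) which turns those ``omit one/omit two'' relations into the full relation by \emph{three} successive applications of (MSF) organized around a maximal non-null partial sum. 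None of this machinery appears in your sketch, and your induction on $|\supp V\cap\supp C|$ lacks exactly the ingredient (the analogue of Claim 1) that makes the restricted $V$ and $C$ into a vector/covector pair of a common minor.

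A further structural gap: the elimination product $X\wedge_e Y$ has coordinates $Y(e)X(f)-X(e)Y(f)\in\NN[F^\times]$, not in $F$, so the argument cannot be run inside the class of ordinary $F$-vectors and $F$-covectors that your proposal works with; one must enlarge to ``generalized'' (co)vectors valued in $\NN[F^\times]^E$ (and then check, as in Propositions~\ref{minor1} and~\ref{minor2} and Theorem~\ref{thm:perfectequalsstronglyperfect}, that this enlarged notion behaves well under minors and is equivalent to perfection). Also note that perfection concerns the vectors and covectors of (strong) $F$-matroids given by dual pairs of signatures, not of weak $F$-matroids, so recasting the statement in terms of weak matroids and their ``3-term modular elimination axioms'' starts the argument from the wrong definition. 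As it stands, the proposal is a plan to rediscover the proof rather than a proof: the fusion axiom is invoked at the right moments, but the objects to which it must be applied ($\gamma$'s produced by covector elimination, scaled relations handled by Proposition~\ref{MSF'}, and the iterated fusion in Proposition~\ref{sum'}) are missing.
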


In fact, we will prove a stronger version of Theorem~\ref{thm:mainthm} in which we replace (SF) with the modified axiom:

\begin{itemize}
\item[(MSF)] If $\alpha + \gamma$ and $\beta - \gamma$ are in $N_F$ with $\alpha, \beta, \gamma \in \NN[F^\times]$ and either $\gamma = 0$ or $\gamma \not\in N_F$, and if $\| \alpha + \beta \| \geq 4$, then $\alpha + \beta \in N_F$.
\end{itemize}  

Our proof will show more generally that a tract satisfying (MSF) is {\bf strongly perfect}, a notion which will be defined in Section~\ref{sec:MOTSPT} (but it turns out to be equivalent to perfection in the usual sense; see Theorem~\ref{thm:perfectequalsstronglyperfect}).

\medskip

As a corollary of the strengthened version of Theorem~\ref{thm:mainthm}, we obtain:

\begin{cor}
There is a rule $F \mapsto \sigma(F)$ which associates to each tract $F$ a strongly perfect tract $\sigma(F)$ and which is the identity map on tracts satisfying (MSF).
\end{cor}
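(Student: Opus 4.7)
The plan is to build $\sigma(F)$ by transfinite recursion, iteratively closing $N_F$ under the inferences licensed by (MSF), and then to invoke the strengthened form of Theorem~\ref{thm:mainthm}. Concretely, I would set $N^{(0)} := N_F$, take unions at limit stages, and define $N^{(\alpha+1)}$ to consist of $N^{(\alpha)}$ together with every $\alpha'+\beta' \in \NN[F^\times]$ of norm at least $4$ admitting a witness $\gamma' \in \NN[F^\times]$ such that $\alpha'+\gamma',\, \beta'-\gamma' \in N^{(\alpha)}$ and either $\gamma' = 0$ or $\gamma' \notin N^{(\alpha)}$. Since the chain lives inside the fixed set $\NN[F^\times]$, it stabilizes at some set $N^{*}$, and I would define $\sigma(F)$ to be the tract with multiplicative monoid $F$ and null set $N^{*}$.

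The tract axioms for $\sigma(F)$ are straightforward: (T1) holds because $0 \in N_F$; (T3) is preserved at each stage because multiplication by $\lambda \in F^\times$ sends a fusion witness $\gamma'$ to the witness $\lambda\gamma'$; and (T2) is preserved because the construction only ever adds sums of norm at least $4$, so $-1$ remains the unique $\epsilon$ with $1+\epsilon \in N^{*}$. The crucial check is that $N^{*}$ satisfies (MSF): if $\alpha' + \gamma',\, \beta' - \gamma' \in N^{*}$ with $\|\alpha'+\beta'\| \geq 4$ and $\gamma' = 0$ or $\gamma' \notin N^{*}$, then both elements lie in some $N^{(\alpha)}$, and the hypothesis $\gamma' \notin N^{*}$ forces $\gamma' \notin N^{(\alpha)}$, so $\alpha'+\beta' \in N^{(\alpha+1)} \subseteq N^{*}$. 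The strengthened Theorem~\ref{thm:mainthm} then gives that $\sigma(F)$ is strongly perfect. Finally, if $F$ itself satisfies (MSF) then $N_F$ is already closed under the rule, the chain is constant, and $\sigma(F) = F$.

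The main subtlety---and the step that requires the most care---is that the predicate ``$\gamma \notin N$'' appearing in (MSF) is \emph{anti-monotone} in $N$: enlarging a null set can invalidate witnesses that previously licensed a fusion. For this reason the one-step operator on $\mathcal{P}(\NN[F^\times])$ is not monotone, and $\sigma(F)$ cannot be realized as the intersection of all (MSF) tracts containing $F$ in any naive way. The transfinite construction sidesteps this because the chain $N^{(\alpha)}$ is monotone by design, and the hypothesis $\gamma' \notin N^{*}$ at the fixed point is \emph{stronger} than $\gamma' \notin N^{(\alpha)}$ at any particular stage---which is exactly what is needed to verify (MSF) at the limit despite non-monotonicity of the one-step operator.
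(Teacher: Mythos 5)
Your proposal is correct and is essentially the paper's own argument: both iteratively close $N_F$ under the (MSF) inference restricted to sums of norm at least $4$, take the union of the resulting increasing chain, and verify (MSF) at the limit via the key observation that $\gamma \notin N^{*}$ implies $\gamma \notin N^{(\alpha)}$ at any earlier stage, before invoking Theorem~\ref{thm:strongmainthm}. Your transfinite indexing is an inessential variation (the chain already stabilizes at stage $\omega$, matching the paper's countable union), and your explicit checks of (T1)--(T3) and the remark on non-monotonicity are welcome elaborations of points the paper leaves implicit.
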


\begin{proof}
We can take $\sigma (F)$ to be the tract whose multiplicative group is $F^\times$ and whose null set is defined as follows.
Let $N^{(1)}=N_F$, and for $k \geq 2$ define $N^{(k)}$ to be the set of all elements of the form $\alpha + \beta$ with $\alpha, \beta \in N^{(k-1)}$ or $\alpha + \gamma$ and $\beta - \gamma$ in $N^{(k-1)}$ for some $\gamma \not\in N^{(k-1)}$, and such that $\| \alpha + \beta \| \geq 4$. 
Set $N_{\sigma (F)} := \bigcup_{k \geq 1} N^{(k)}$. It is easy to see that $\sigma(F)=F$ if $F$ satisfies (MSF). We claim that $\sigma(F)$ satisfies (MSF) for every tract $F$.
Indeed, suppose $\alpha + \gamma$ and $\beta - \gamma$ are in $N_{\sigma (P)}$ with either $\gamma = 0$ or $\gamma \not\in N_{\sigma (P)}$, and assume furthermore that $\| \alpha + \beta \| \geq 4$. Then by definition there exists $k\geq 1$ such that $\alpha + \gamma$ and $\beta - \gamma$ are in $N^{(k-1)}$, and if $\gamma \not\in N_{\sigma (P)}$ then $\gamma \not\in N^{(k-1)}$ since $N^{(k-1)} \subset N_{\sigma (P)}$. By the definition of $N^{(k)}$ we have $\alpha + \beta \in N^{(k)}$, hence $\alpha + \beta \in N_{\sigma (P)}$.
\end{proof}

\subsection{Stringent hyperfields and the strong fusion axiom}

It is easy to see that the tract embedding of a partial field satisfies the strong fusion axiom. 
For hyperfields, we show:

\begin{prop}
\label{prop:SFforHyperfields}
If $H$ is a hyperfield, then $H$ satisfies the strong fusion axiom if and only if $H$ is stringent.
\end{prop}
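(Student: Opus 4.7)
The plan is to prove the equivalence in two directions. The easier direction is (SF) $\Rightarrow$ stringent. Given $a, b \in H^\times$ with $a \neq -b$ and two elements $c, c' \in a \hyperplus b$ that I must show are equal, I would apply (SF) to $\alpha = -c'$, $\beta = c$, and $\gamma = a + b$. The side conditions check out: $\gamma \neq 0$ in $\NN[H^\times]$, and $\gamma \notin N_H$ since $a \neq -b$ forces $0 \notin a \hyperplus b$. The hypotheses $c, c' \in a \hyperplus b$ translate to the null relations $\beta - \gamma = c - a - b \in N_H$ and $\alpha + \gamma = -c' + a + b \in N_H$, so (SF) yields $\alpha + \beta = c - c' \in N_H$, which says exactly $c = c'$.

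For the other direction, stringent $\Rightarrow$ (SF), the strategy is to reduce to the following structural lemma: in a stringent hyperfield $H$, if $\gamma \in \NN[H^\times]$ satisfies $\gamma \notin N_H$, then the iterated hypersum of the terms of $\gamma$ is a singleton $\{g\}$ with $g \in H^\times$. Granting the lemma, (SF) follows readily. When $\gamma = 0$ we have $\alpha, \beta \in N_H$, and the idyll axiom (which hyperfields satisfy automatically, since $0 \hyperplus 0 = \{0\}$ sits inside the hypersum of $\alpha + \beta$) gives $\alpha + \beta \in N_H$. When $\gamma \notin N_H$ and the hypersum of $\gamma$ is $\{g\}$, the relations $\alpha + \gamma, \beta - \gamma \in N_H$ force $-g$ to lie in the hypersum of $\alpha$ and $g$ to lie in that of $\beta$; since $0 \in (-g) \hyperplus g$, associativity of hyperaddition places $0$ in the hypersum of $\alpha + \beta$.

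I would prove the lemma by induction on $n = \|\gamma\|$, the cases $n = 1, 2$ being immediate (the latter is precisely stringency). For $n \geq 3$, writing $\gamma = x_1 + \cdots + x_n$, the strategy is to find indices $i \neq j$ with $x_i \neq -x_j$: stringency then makes $x_i \hyperplus x_j = \{x_i \oplus x_j\}$ a singleton in $H^\times$, and by associativity the hypersum of $\gamma$ equals that of the shorter formal sum $\gamma'$ obtained by replacing $x_i + x_j$ with $x_i \oplus x_j$, so induction applies. The main obstacle is the degenerate case in which no such pair exists, i.e.\ $x_i = -x_j$ for every $i \neq j$; a short argument shows that for $n \geq 3$ this forces $-1 = 1$ in $H$ and all $x_i$ equal to a common $a \in H^\times$ with $a = -a$. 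To handle this case I would analyze the $n$-fold self-hypersum $S_n$ of $a$ by cases on $S_2 = a \hyperplus a$: if $S_2 = \{0\}$ then $S_n$ is trivially a singleton for every $n$; if $a \in S_2$ then iteration shows $0 \in S_n$ for every $n \geq 2$, contradicting the hypothesis $\gamma \notin N_H$; otherwise $a \notin S_2$ but $S_2 \supsetneq \{0\}$, and reversibility combined with stringency force $c \oplus a = a$ for every $c \in S_2 \setminus \{0\}$, giving $S_n = \{a\}$ for odd $n$ (while even $n$ yields $S_n \supseteq S_2 \ni 0$, again contradicting the hypothesis).
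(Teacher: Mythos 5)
Your proof is correct, and its overall architecture matches the paper's: both directions are routed through the intermediate characterization that a non-null formal sum in a stringent hyperfield has singleton hypersum, and your (SF)~$\Rightarrow$~stringent argument (applying (SF) with $\gamma = a+b$ to force $c - c' \in N_H$) is the same computation as the paper's, just run directly rather than by contradiction. The one substantive difference is that the paper simply cites \cite[Lemma 39]{Bowler-Pendavingh19} for the singleton-hypersum lemma, whereas you prove it from scratch by induction on $\|\gamma\|$, including a careful treatment of the degenerate case where every pair of terms is mutually opposite (forcing $-1=1$ and all terms equal), which you resolve by a case analysis on $a \hyperplus a$. That self-contained argument is sound --- in particular your use of reversibility plus stringency to get $c \hyperplus a = \{a\}$ for $c \in (a \hyperplus a)\setminus\{0\}$ is exactly right --- and it makes the proof independent of the Bowler--Pendavingh reference, at the cost of some length.
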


In particular, this gives a new proof of the fact, originally proved by Bowler and Pendavingh in \cite{Bowler-Pendavingh19}, that stringent hyperfields are perfect.

\begin{rem}
If we removed the assumption that $\gamma \not\in N_F$ in (SF), then stringent hyperfields would no longer satisfy this property. 
For example, in the sign hyperfield ${\mathbb S}$ with $\alpha = 1$, $\beta = 1$, and $\gamma = 1 + (-1)$, we have $\alpha + \gamma, \beta - \gamma \in N_{\mathbb S}$ but $\alpha + \beta \notin N_{\mathbb S}$.
\end{rem}

It would be useful to have a natural and easily verified sufficient condition which implies perfection, is satisfied by stringent hyperfields and partial fields, and which is stable under taking finite products (since one easily shows that the product of perfect pastures is perfect.) 
Unfortunately, neither (SF) nor (MSF) is stable under products, as the following shows:


\begin{ex}
A counterexample which applies to both (SF) and (MSF) is $F = \mathbb{S}\times \mathbb{S}$, where $\mathbb{S}$ is the sign hyperfield. 
Indeed, note that if $\gamma = (1,1)+(-1,1)$, $\alpha = (1,-1)+(1,-1)$, and $\beta = (1,1)+(1,1)$ then $\alpha + \gamma, \beta - \gamma \in N_F$ and $\gamma \not\in N_F$ but $\alpha + \beta \not\in N_F$.
\end{ex}




\subsection{Structure of the paper} 
The proofs of Propositions~\ref{prop:HAP} and~\ref{prop:SFforHyperfields} are given in Section~\ref{sec:HPFPT}. In Section~\ref{sec:MOTSPT} we recall the definition of an $F$-matroid and define what it means for an $F$-matroid (resp. a tract) to be strongly perfect. We then prove that strong perfection and perfection coincide. The proof of (a strengthening of) Theorem~\ref{thm:mainthm} is given in Section~\ref{sec:SFTSP}. Finally, in Section~\ref{sec:comparison} we compare our results to those of Dress--Wenzel.

\subsection{Acknowledgments}
We thank Oliver Lorscheid and Nathan Bowler for helpful conversations. We also thank the anonymous referees for their helpful comments; in particular, one of the referees suggested the statement and proof of Theorem~\ref{thm:perfectequalsstronglyperfect}.
The first author was supported in part by a Simons Foundation Collaboration Grant.

\section{Hyperfields, partial fields, and fusion axioms}
\label{sec:HPFPT}

In this section we prove Propositions~\ref{prop:HAP} and~\ref{prop:SFforHyperfields}.

\subsection{Hyperfields, partial fields, and the fusion axiom}
Our goal in this section is to prove Proposition~\ref{prop:HAP}. 
In order to do this, we first recall the precise definition of a hyperfield.

\begin{df}
A {\bf commutative hypergroup} is a set $G$ together with a distinctive element $0$ and a {\bf hyperaddition}, which is a map
\[
 \hyperplus: \quad G\times G \quad \longrightarrow \quad \cP(G)
\]
into the power set $\cP(G)$ of $G$, such that:
\begin{enumerate}[label={(HG\arabic*)}]
 \item\label{HG1} $a\hyperplus b$ is not empty,    \hfill\textit{(nonempty sums)}
 \item\label{HG2} $\bigcup_{d\in b\hyperplus c} a\hyperplus d = \bigcup_{d\in a\hyperplus b} d\hyperplus c$, \hfill\textit{(associativity)}
 \item\label{HG3} $0\hyperplus a=a\hyperplus 0=\{a\}$,  \hfill\textit{(neutral element)}
 \item\label{HG4} there is a unique element $-a$ in $G$ such that $0\in a\hyperplus (-a)$,   \hfill\textit{(inverses)}
 \item\label{HG5} $a\hyperplus b=b\hyperplus a$,    \hfill\textit{(commutativity)}
 \item\label{HG6} $c\in a\hyperplus b$ if and only if $b \in c \hyperplus (-a)$    \hfill\textit{(reversibility)}
\end{enumerate}
for all $a,b,c\in G$. 
\end{df}

Thanks to commutativity and associativity, it makes sense to define hypersums of several elements $a_1,\dotsc, a_n$ unambiguously by the recursive formula

\[
 \hypersum_{i=1}^n \ a_i \ = \  \bigcup_{b\in\hypersum_{i=1}^{n-1} a_i} b\hyperplus a_n.
\]

\begin{df}
A {\bf (commutative) hyperring} is a set $R$ together with distinctive elements $0$ and $1$ and with maps $\hyperplus:R\times R\to\cP(R)$ and $\cdot:R\times R\to R$ such that 
\begin{enumerate}[label={(HR\arabic*)}]
 \item\label{HR1} $(R,\hyperplus,0)$ is a commutative hypergroup,
 \item\label{HR2} $(R,\cdot,1)$ is a commutative monoid,
 \item\label{HR3} $0\cdot a=a\cdot 0=0$,
 \item\label{HR4} $a\cdot(b\hyperplus c)=ab\hyperplus ac$
\end{enumerate}
for all $a,b,c\in R$ where $a\cdot(b\hyperplus c)=\{ad \; |\; d\in b\hyperplus c\}$. 

A {\bf hyperfield} is a hyperring $H$ such that $0\neq1$ and every nonzero element has a multiplicative inverse, i.e., $H^\times = H \setminus \{ 0 \}$.
\end{df}


\begin{lemma} \label{lem:HF}
If $H$ is a hyperfield and $F_H$ is the associated tract, then $F_H$ satisfies the fusion axiom.
\end{lemma}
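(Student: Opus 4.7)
The plan is to unpack the definition of $N_{F_H}$ directly in terms of the hypersum and then exploit two fundamental hypergroup properties: the associativity axiom \ref{HG2} and the uniqueness of inverses \ref{HG4}. Write $\alpha = a_1 + \cdots + a_k$ and $\beta = b_1 + \cdots + b_\ell$ with $a_i, b_j \in H^\times$. By definition of $N_{F_H}$,
\[
\alpha + z \in N_{F_H} \iff 0 \in a_1 \hyperplus \cdots \hyperplus a_k \hyperplus z,
\]
and likewise for $\beta - z$. So everything reduces to a statement about hypersums.

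The key reformulation is the following: iterated application of \ref{HG2} gives
\[
a_1 \hyperplus \cdots \hyperplus a_k \hyperplus z \;=\; \bigcup_{w \,\in\, \hypersum_{i=1}^{k} a_i} w \hyperplus z,
\]
and by \ref{HG4} the condition $0 \in w \hyperplus z$ picks out the unique $w = -z$. Hence
\[
\alpha + z \in N_{F_H} \;\iff\; -z \in \hypersum_{i=1}^{k} a_i,
\]
and symmetrically $\beta - z \in N_{F_H} \iff z \in \hypersum_{j=1}^{\ell} b_j$. (When $z=0$, these two conditions read $0 \in \hypersum_i a_i$ and $0 \in \hypersum_j b_j$, which are precisely $\alpha, \beta \in N_{F_H}$, so nothing special needs to be done in that boundary case.)

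Assuming both conditions, I then combine: since $-z \in \hypersum_i a_i$, $z \in \hypersum_j b_j$, and $0 \in (-z) \hyperplus z$ by \ref{HG4}, we get
\[
0 \;\in\; (-z) \hyperplus z \;\subseteq\; \bigl(\hypersum_{i=1}^{k} a_i \bigr) \hyperplus \bigl( \hypersum_{j=1}^{\ell} b_j \bigr).
\]
A final appeal to iterated associativity identifies the right-hand side with the full hypersum $a_1 \hyperplus \cdots \hyperplus a_k \hyperplus b_1 \hyperplus \cdots \hyperplus b_\ell$, which by definition of $N_{F_H}$ means $\alpha + \beta \in N_{F_H}$, as desired.

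There is no real obstacle here; the only thing to be careful about is invoking the general associativity of multi-term hypersums (which formally requires a short induction from the binary axiom \ref{HG2}, but is standard and can be cited). The same argument will in fact establish the stronger statement underlying Proposition~\ref{prop:HAP}, since every step only uses $0 \in \hypersum$-type conditions rather than any $3$-term restriction on $\alpha$ and $\beta$.
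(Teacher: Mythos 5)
Your argument is correct and is essentially the paper's own proof: unwind the definition of $N_{F_H}$, use associativity together with the uniqueness in (HG4) to conclude $-z \in \hypersum_{i=1}^{k} a_i$ and $z \in \hypersum_{j=1}^{\ell} b_j$, and recombine to get $0$ in the full hypersum. Only your closing aside overreaches slightly: this argument yields just the inclusion $N_{\overline{P_H}} \subseteq N_{F_H}$ in Proposition~\ref{prop:HAP}, while the reverse inclusion requires the separate induction on $\|\gamma\|$ that splits a long null hypersum into two shorter null pieces via (HG2) and reversibility.
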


\begin{proof}
Suppose $\alpha + z$ and $\beta - z$ are in $N_H$, where $\alpha = \sum_{i=1}^k x_i$, $\beta = \sum_{j=1}^l y_j$, and $z \in H$. Then by definition, $0 \in (\hypersum_{i=1}^k x_i) \hypersum z $ and $0 \in (\hypersum_{j=1}^l y_j) \hypersum -z$. By the inverse axiom (HG4), $-z$ is in $\hypersum_{i=1}^k x_i$ and $z$ is in $\hypersum_{j=1}^l y_j $. Hence, 0 is in $(\hypersum_{i=1}^k x_i) \hypersum (\hypersum_{j=1}^l y_j )$, i.e., $\alpha + \beta \in N_H$.
\end{proof}

We can now prove Proposition \ref{prop:HAP}.

\begin{proof}[Proof of Proposition \ref{prop:HAP}]
First, notice that $F := F_H$, $P := P_H$, and $\overline{P_H}$ all have the same multiplicative group, so it suffices to prove that $F_H$ and $\overline{P_H}$ have the same null set. 

\par

Next, note that the null set of $P_H$ is contained in the null set of $F_H$, so by Lemma \ref{lem:HF} and the definition of $\overline{P_H}$, $N_{\overline{P}} \subseteq N_F$.  
Conversely, we will prove by induction on $\|\gamma\|$ that if $\gamma \in N_F$ then $\gamma \in N_{\overline{P}}$. 

\par

The base case $\| \gamma \| \leq 3$ is trivial. Assume, then, that $\gamma \in N_{\overline{P}}$ for every $\gamma \in N_F$ with $\|\gamma\|< k $, and let $\gamma \in N_F$ have norm $k \geq 4$. Then $\gamma = \sum_{i=1}^k x_i$ with $x_i \in H$, and since $\gamma \in N_F$ we have $0 \in \hypersum_{i=1}^k x_i$ in $H$. By the associativity axiom (HG2), we have $0\in (\hypersum_{i=1}^{k-2} x_i) \hypersum (x_{k-1}\hypersum x_k)$. Since $k \geq 4$, there exists some element $z\in H$ such that $z \in \hypersum_{i=1}^{k-2} x_i$ and $-z \in x_{k-1}\hypersum x_k$. By the reversibility axiom (HG4), $0 \in -z \hypersum_{i=1}^{k-2} x_i$ and $0\in z \hypersum x_{k-1}\hypersum x_k$, which means, setting $\alpha = \sum_{i=1}^{k-2} x_i$ and $\beta = x_{k-1} + x_k$, that $\alpha - z$ and $\beta + z$ both belong to $N_F$.
By the inductive hypothesis, these two elements of $N_F$ are in $N_{\overline{P}}$. Applying the fusion axiom gives $\gamma = \alpha + \beta \in N_{\overline{P}}$.
\end{proof}

\subsection{Stringent hyperfields and the strong fusion axiom}
\label{sec:SHDDHSF}

Our goal in this section is to prove Proposition~\ref{prop:SFforHyperfields}. The following is a more precise version of this result.

\begin{prop}
Let $H$ be a hyperfield. Then the following are equivalent:\par 
(1) $H$ is stringent.\par 
(2) If $x_1, ... , x_k \in H$ and $0 \not\in \hypersum_{i=1}^k x_i$ then $|\hypersum_{i=1}^k x_i|=1$.\par
(3) The tract $F=F_H$ associated to $H$ satisfies the the strong fusion axiom (SF). 
\end{prop}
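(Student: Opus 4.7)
The plan is to close the three-way equivalence by establishing the implications $(2)\Rightarrow(1)$, $(3)\Rightarrow(2)$, $(2)\Rightarrow(3)$, and $(1)\Rightarrow(2)$. The first three are short and use only reversibility in the hyperfield together with the fusion axiom (F) from Proposition~\ref{prop:HAP}; the bulk of the technical work goes into the last.

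For the short directions: $(2)\Rightarrow(1)$ is simply the $k=2$ case, since $a\neq -b$ means $0\notin a\hyperplus b$. For $(3)\Rightarrow(2)$, suppose $x_1,\dots,x_k\in H$ with $0\notin\hypersum_i x_i$ and pick $y_1,y_2\in\hypersum_i x_i$; reversibility gives $-y_j+\sum_i x_i\in N_F$ for $j=1,2$, and scaling the second relation by $-1$ yields $y_2-\sum_i x_i\in N_F$. Setting $\gamma:=\sum_i x_i$, $\alpha:=-y_1$, $\beta:=y_2$, we have $\gamma\neq 0$, $\gamma\notin N_F$, and $\alpha+\gamma,\beta-\gamma\in N_F$; the strong fusion axiom then yields $\alpha+\beta=-y_1+y_2\in N_F$, forcing $y_1=y_2$. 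For $(2)\Rightarrow(3)$, given $\alpha+\gamma,\beta-\gamma\in N_F$ with $\gamma=\sum_i z_i\neq 0$ and $\gamma\notin N_F$, applying (2) to $z_1,\dots,z_k$ yields $\hypersum_i z_i=\{z\}$; by associativity of the hypersum, $\alpha+\gamma\in N_F$ is equivalent to $\alpha+z\in N_F$ and similarly for $\beta$, and (F) then delivers $\alpha+\beta\in N_F$.

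For $(1)\Rightarrow(2)$, I would proceed by induction on $k$, with $k\leq 2$ handled directly by the definition of stringency. For $k\geq 3$ and $0\notin\hypersum_i x_i$, the typical case is that some pair $(x_i,x_j)$ with $i\neq j$ satisfies $x_i\neq -x_j$: stringency then gives $x_i\hyperplus x_j=\{c\}$, and by associativity the hypersum equals the $(k-1)$-term hypersum of $c$ with the remaining $x_l$'s, to which the inductive hypothesis applies. The exceptional case is when every pair $(x_i,x_j)$ with $i\neq j$ has $x_i=-x_j$; a short calculation shows that for $k\geq 3$ this forces all the $x_i$ to equal a single self-negative element $x$ (satisfying $2x=0$), and this sub-case requires a separate sub-induction on $k$, tracking whether $0$ is (re-)introduced into the partial hypersums of $x$'s.

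I expect the self-negative sub-case of $(1)\Rightarrow(2)$ to be the main obstacle: if a partial hypersum of $x$'s already contains $0$ together with other elements, and $-x$ happens not to lie in that set, the naive inductive step can stall, and one must exploit the specific structure of $x\hyperplus x$ in a stringent hyperfield to ensure that the next step either collapses the hypersum to a singleton or reintroduces $0$.
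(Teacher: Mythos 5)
Your three short implications are correct. The implications $(2)\Rightarrow(3)$ and $(3)\Rightarrow(2)$ essentially match the paper's arguments (the paper proves $(3)\Rightarrow(1)$ rather than $(3)\Rightarrow(2)$, but your version is the natural generalization of the same reversibility-plus-(SF) trick and is fine), and $(2)\Rightarrow(1)$ is indeed just the $k=2$ case.

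The genuine issue is $(1)\Rightarrow(2)$, which you explicitly leave unfinished: you flag the self-negative sub-case as ``the main obstacle'' without resolving it, and as written the induction does stall there. For the record, the paper does not prove this implication either --- it simply cites \cite[Lemma 39]{Bowler-Pendavingh19} --- so you are taking on extra work. The good news is that your sub-case is fillable with one more application of reversibility and stringency. In the exceptional case all $x_i$ equal a single $x$ with $x=-x$, and one analyzes $x\hyperplus x \ni 0$. If $x\in x\hyperplus x$, then $0\in\hypersum^k x$ for every $k\geq 2$ (the $0$ in $x \hyperplus x$ contributes $x$ to $\hypersum^3 x$, and then $x\hyperplus x\ni 0$ again at the next step), so the hypothesis of (2) is vacuous. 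If $x\notin x\hyperplus x$, take any nonzero $s\in x\hyperplus x$; reversibility gives $x\in s\hyperplus(-x)=s\hyperplus x$, and since $s\neq x=-x$ stringency forces $s\hyperplus x=\{x\}$. Hence $\hypersum^3 x=\bigcup_{b\in x\hyperplus x} b\hyperplus x=\{x\}$, and inductively $\hypersum^{2m+1}x=\{x\}$ while $\hypersum^{2m}x=x\hyperplus x\ni 0$ for $m\geq 1$; in either parity the claim holds. With this inserted (and noting that in your ``typical'' case the singleton $x_i\hyperplus x_j=\{c\}$ has $c\neq 0$ because $x_i\neq -x_j$, so the induction genuinely drops to $k-1$ nonzero terms), your proof of $(1)\Rightarrow(2)$ is complete and self-contained, which is arguably a small improvement over the citation.
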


\begin{proof}
$(1) \Rightarrow (2)$:
This follows from \cite[Lemma 39]{Bowler-Pendavingh19}.  \par
$(2)\Rightarrow (3)$: 
Suppose $\alpha = \sum_{i=1}^\ell x_i, \beta = \sum_{j=1}^m y_j$, and $\gamma=\sum_{k=1}^n z_k$ satisfy 
$\alpha + \gamma, \beta - \gamma \in N_F$ and $\gamma = 0$ or $\gamma \not\in N_F$.
If $\gamma = 0$, the result follows from Lemma~\ref{lem:HF}. So we may assume that $\gamma \not\in N_F$.
Thus $\hypersum_{k=1}^n z_k = \{ z \}$ is a singleton and we have $-z\in \hypersum_{i=1}^\ell x_i$ and $z\in \hypersum_{j=1}^m y_j$.
It follows that $0\in (\hypersum_{i=1}^\ell x_i) \hypersum (\hypersum_{j=1}^m y_j)$, i.e. $\alpha + \beta \in N_F.$\par
$(3) \Rightarrow (1)$:
Suppose there exist $x,y\in H$ such that $x\neq -y$ and $|x\hypersum y|\geq 2$. Choosing $z\neq z' \in x\hypersum y$, we have $0\in -z\hypersum x \hypersum y$ and $0\in -z'\hypersum x \hypersum y$. Since $x\neq -y$ we have $0\notin x\hypersum y $, and thus (SF) implies that 
$0\in z\hypersum -z'$, which contradicts the fact that $z\neq z'$.
\end{proof}

\section{Matroids over tracts and strongly perfect tracts}
\label{sec:MOTSPT}

Our goal in this section is to define what it means for a tract $F$ to be strongly perfect, and to show that $F$ is strongly perfect if and only if it is perfect.
To do this, we need to introduce some terminology related to matroids over tracts.

\begin{df}
(Involution) Let $F$ be a tract. An {\bf involution} of $F$ is a homomorphism $\tau : F \rightarrow F$ such that $\tau^2$ is the identity map. For an element $x\in F$, its involution is usually denoted by $\bar{x}$ instead of $\tau(x)$.
\end{df}


\begin{df}
(Orthogonality) Let $F$ be a tract endowed with an involution $x \mapsto \bar{x}$, and let $E =\{ 1,...,m\}.$ The {\bf inner product} of $X=(x_1,..., x_m) \in \NN[F^\times]^m$ and $Y =(y_1,...,y_m) \in \NN[F^\times]^m$ is defined to be
\[
X\cdot Y := x_1\bar{y}_1+\cdots +x_m\bar{y}_m.
\]

We say that $X$ is {\bf orthogonal} to $Y$ if $X\cdot Y\in N_F$. 
\end{df}

Note that our definition of orthogonality generalizes \cite[Definition 3.4]{Baker-Bowler19}, since for us $X$ and $Y$ are in $\NN[F^\times]^m$ instead of $F^m$.

\medskip

If $S \subseteq \NN[F^\times]^m$, we denote by $S^\perp$ the set of all $X \in \NN[F^\times]^m$ such that $X \perp Y$ for all $Y \in S$. 

\medskip

Let $F$ be a tract endowed with an involution $x \mapsto \bar{x}$, and let $\underline{M}$ be a (classical) matroid with ground set E. 
The following two definitions are taken directly from \cite{Baker-Bowler19}.

\begin{df}
($F$-signature) 
A subset $C$ of $F^E$ is an {\bf $F$-signature of $\underline{M}$} if $C$ satisfies the following properties: \par
(C0) $0\notin C.$ \par 
(C1) If $X\in C$ and $\alpha \in F^\times$ then $\alpha\cdot X \in C$. \par
(C2) Taking supports gives a bijection from the projectivization of $C$ to the set of circuits of $\underline{M}$.
\end{df}

\begin{df}
(Dual pair of $F$-signatures)
Let $C,D \subseteq F^E$. We call $(C,D)$ a {\bf dual pair of $F$-signatures of $\underline{M}$} if: 
\begin{itemize}
\item [(DP1)] $C$ is an F-signature of $\underline{M}$.\par
\item [(DP2)] $D$ is an F-signature of the dual matroid $\underline{M}^*$.\par
\item [(DP3)] $C \perp D$, meaning that $X \perp Y$ for all $X \in C$ and $Y \in D$.
\end{itemize}

\end{df}

For the purposes of this paper, we define a (strong) {\bf $F$-matroid} $M$\footnote{All $F$-matroids in this paper will be strong, so we sometimes omit the modifier.} to be a matroid $\underline{M}$ (called the {\bf underlying matroid} of $M$), together with a dual pair of $F$-signatures of $\underline{M}$. 
The equivalence of this definition with the one given in \cite{Baker-Bowler19} is proved in \cite[Theorem 3.26]{Baker-Bowler19}.

\medskip

We call $C$ (resp. $D$) the set of $F$-circuits (resp. $F$-cocircuits) of $M$, and denote these by $C(M)$ and $C^*(M)$, respectively. 

\begin{df}
We say that $X \in \NN[F^\times]^m$ is a {\bf generalized vector} (resp. {\bf generalized covector}) of $M$ if $X\perp Y$ for every $Y \in C^*(M)$ (resp. for every $Y \in C(M)$). We denote the set of all generalized vectors (resp. covectors) by $\mathcal{V}(M)$ (resp. $\mathcal V^*(M)$).
\end{df}

Note that a {\bf vector} of $M$, in the sense of \cite{Baker-Bowler19}, is just a generalized vector belonging to $F^m$ rather than $\NN[F^\times]^m$, and similarly for covectors. We denote by $V(M)$ (resp. $V^*(M)$) the set of vectors (resp. covectors) of $M$.

\begin{df}
An $F$-matroid $M$ is {\bf strongly perfect} if $\mathcal{V}(M)\perp \mathcal{V^*}(M)$.
A tract $F$ is strongly perfect if every $F$-matroid is strongly perfect.
\end{df}

A strongly perfect tract is obviously perfect. We now show that the converse holds as well:

\begin{thm}
\label{thm:perfectequalsstronglyperfect}
A tract $F$ is perfect if and only if it is strongly perfect.
\end{thm}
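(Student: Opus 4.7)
One direction is immediate: since $V(M)\subseteq \cV(M)$ and $V^*(M)\subseteq \cV^*(M)$, strong perfection trivially implies perfection. For the converse, my plan has two stages. First, I would derive from perfection the closure of $N_F$ under addition (axiom (I) of the paper). Second, using (I) together with a parallel-then-series extension of the matroid, I would realize any pair $X\in\cV(M)$, $Y\in\cV^*(M)$ as an honest vector-covector pair $(\hat X,\hat Y)\in V(\hat M)\times V^*(\hat M)$, so that perfection applied to $\hat M$ closes the argument.

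For the first stage, given $\alpha=\sum_{i=1}^p x_i$ and $\beta=\sum_{j=1}^q y_j$ in $N_F$, I would apply perfection to the $F$-matroid $M'=U_{1,p}\oplus U_{1,q}$ equipped with its canonical $F$-matroid structure: the $F$-circuits are the scalar multiples of $e_i-e_{i'}$ within each parallel class, and the $F$-cocircuits are the tuples constant on each class. The tuple $(x_1,\ldots,x_p,y_1,\ldots,y_q)\in F^{p+q}$ lies in $V(M')$ because its two within-class sums are $\alpha,\beta\in N_F$; the all-ones tuple lies in $V^*(M')$ because $1-1\in N_F$ by (T2)/(T3). Their inner product is $\alpha+\beta$, so perfection forces $\alpha+\beta\in N_F$. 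Degenerate cases with $p$ or $q$ in $\{0,1\}$ reduce either to (T1) or to padding with a zero coordinate.

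For the second stage, write $X_i=\sum_{j=1}^{k_i}a_{ij}$ and $Y_i=\sum_{l=1}^{l_i}b_{il}$. The plan is to construct $\hat M$ in two steps: first parallel-extend $M$ by replacing each $i\in E$ with $k_i$ parallel copies $(i,j)$, obtaining $M_1$; then series-extend $M_1$ by subdividing each $(i,j)$ into $l_i$ series copies $(i,j,l)$, obtaining $\hat M$. Set $\hat X_{(i,j,l)}=a_{ij}$ (constant in $l$) and $\hat Y_{(i,j,l)}=b_{il}$ (constant in $j$); distributivity in $\NN[F^\times]$ then gives $\hat X\cdot\hat Y = \sum_{i,j,l} a_{ij}\,\overline{b_{il}} = \sum_i X_i\,\overline{Y_i} = X\cdot Y$. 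Checking $\hat X\in V(\hat M)$ is routine: against cocircuits inherited from $M$ one obtains $X\cdot D\in N_F$ using $X\in\cV(M)$, and against new series $2$-cocircuits one obtains the trivial $a_{ij}-a_{ij}\in N_F$ from (T2)/(T3).

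The critical verification is $\hat Y\in V^*(\hat M)$. The $\hat M$-circuits lifted from $M$-circuits give $Y\cdot C\in N_F$ using $Y\in\cV^*(M)$ without difficulty, but the $\hat M$-circuits obtained by series-lifting the parallel $2$-circuits of $M_1$ evaluate $\hat Y$ to $Y_i-Y_i = \sum_l(b_{il}-b_{il})$; each summand lies in $N_F$, so membership of the total sum in $N_F$ is exactly the instance of (I) from stage one. With $\hat Y\in V^*(\hat M)$ in hand, perfection applied to $\hat M$ yields $\hat X\cdot\hat Y = X\cdot Y\in N_F$, completing the proof. I expect the main obstacle to be stage one: perfection is a priori a statement only about $F$-valued tuples, and the direct-sum matroid $U_{1,p}\oplus U_{1,q}$ is the device that converts this hypothesis into the formal-sum identity (I) needed to handle the $2$-circuit obstruction in stage two.
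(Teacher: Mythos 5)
Your proposal is correct, and its core construction (your Stage 2) is essentially the paper's proof: there too, each $e\in E$ is replaced by a series--parallel network with $\|Y(e)\|$ bundles in series, each consisting of $\|X(e)\|$ parallel elements, the generalized (co)vectors are spread out over the new ground set so that the inner product is preserved, and perfection of the enlarged matroid closes the argument. (You perform the parallel extension before the series extension, the reverse of the paper's order; this is immaterial and merely moves the delicate orthogonality check from the cocircuit side to the circuit side.) Where you go beyond the paper is Stage 1. The paper declares the verification that the spread-out elements are honest vectors and covectors to be ``straightforward,'' but, exactly as you observe, pairing them against the \emph{new} two-element (co)circuits created by the extension produces a formal sum of $\|X(e)\|$ (resp.\ $\|Y(e)\|$) terms of the form $z-z$, and additive closure of $N_F$ --- property (I) --- is not a tract axiom, so a sum of null elements need not a priori be null. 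Your $U_{1,p}\oplus U_{1,q}$ gadget, which extracts (I) from perfection by realizing two null sums as the within-class restrictions of a genuine vector paired against the all-ones covector, is precisely what is needed to license this step, and it does not appear in the paper. Your write-up is therefore, if anything, more complete than the published proof, which is correct only modulo the (true but non-obvious) fact that perfect tracts satisfy (I).
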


For the proof of Theorem~\ref{thm:perfectequalsstronglyperfect}, we will need the following straightforward lemma, whose proof we omit.

\begin{lemma} \label{lem:seriesparallel}
Let $F$ be a tract, and let $M$ be an $F$-matroid on $E$ with underlying matroid $\underline{M}$. 
Let $e \in E$, let $\{ e_1,e_2 \}$ be a 2-element set disjoint from $E$, and let $E' = E \backslash \{ e \} \cup \{ e_1,e_2 \}$.
\begin{enumerate}
    \item There is an $F$-matroid $\sigma_e(M)$ on $E'$, whose underlying matroid is obtained by replacing $e$ with two elements $e_1,e_2$ in series, and whose $F$-circuits $C'$ are given by $C'(f) = C(f)$ for $f \in E$ and $C'(f) = C(e)$ for $f \in \{e_1,e_2 \}$, where $C$ is an $F$-circuit of $M$. The $F$-cocircuits $D'$ of $\sigma_e(M)$ are given by either (i) $D'(f) = D(f)$ for $f \in E$, $D'(e_1)=0$, and $D'(e_2)=D(e)$ or (ii) $D'(f) = D(f)$ for $f \in E$, $D'(e_1)=D(e)$, and $D'(e_2)=0$, for $D$ an $F$-cocircuit of $M$, or (iii) $D'(f) = 0$ for $f \in E$, $D'(e_1)= a \in F^\times$, and $D'(e_2)=-a$.
    \item There is an $F$-matroid $\pi_e(M)$ on $E'$, whose underlying matroid is obtained by replacing $e$ with two elements $e_1,e_2$ in parallel, and whose $F$-cocircuits $D'$ are given by $D'(f) = D(f)$ for $f \in E$ and $D'(f) = D(e)$ for $f \in \{e_1,e_2 \}$, where $D$ is an $F$-cocircuit of $M$. The $F$-circuits $C'$ of $\pi_e(M)$ are given by either (i) $C'(f) = C(f)$ for $f \in E$, $C'(e_1)=0$, and $C'(e_2)=C(e)$ or (ii) $C'(f) = C(f)$ for $f \in E$, $C'(e_1)=C(e)$, and $C'(e_2)=0$, for $C$ an $F$-circuit of $M$, or (iii) $C'(f) = 0$ for $f \in E$, $C'(e_1)= a \in F^\times$, and $C'(e_2)=-a$.
    \end{enumerate}
\end{lemma}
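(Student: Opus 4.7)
My plan is to verify part (1) directly and then deduce part (2) by appealing to matroid duality. Recall that if $M$ is an $F$-matroid with $F$-circuits $C$ and $F$-cocircuits $D$, the dual $F$-matroid $M^\ast$ has $F$-circuits $D$ and $F$-cocircuits $C$ on the dual underlying matroid $\underline{M}^\ast$. Since series and parallel extensions are exchanged under matroid duality classically, we have $\pi_e(M) = \sigma_e(M^\ast)^\ast$, and a direct comparison of the lists shows that the three types of $F$-circuits of $\pi_e(M)$ in (2) match (with roles of circuits and cocircuits swapped) the three types of $F$-cocircuits of $\sigma_e(M^\ast)$ in (1). So it suffices to construct $\sigma_e(M)$.

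For part (1), I would first check that the proposed $C'$ and $D'$ satisfy the $F$-signature axioms (C0), (C1), (C2) for $\sigma_e(\underline{M})$ and its dual. Axioms (C0) and (C1) are immediate from the corresponding axioms for $C(M)$ and $C^\ast(M)$, noting in case (iii) that $a \in F^\times$ forces $D' \neq 0$, and that the $F^\times$-action on $C'$ and $D'$ is inherited coordinatewise from $C$ and $D$. For axiom (C2) on the circuit side, the classical description of circuits of a series extension says they are the circuits of $\underline{M}$ not meeting $e$, together with $(\gamma \setminus \{e\}) \cup \{e_1,e_2\}$ for each circuit $\gamma$ meeting $e$; this matches the supports of the $C'$ depending on whether $C(e) = 0$ or not. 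For the cocircuit side, one uses the classical description of cocircuits of $\sigma_e(\underline{M})$: cocircuits of $\underline{M}$ not meeting $e$, cocircuits meeting $e$ split into two (using $e_1$ or $e_2$ alone), and the new series cocircuit $\{e_1,e_2\}$. Types (i), (ii) with $D(e) \neq 0$ give the split cocircuits; the two descriptions collapse to the same signature when $D(e) = 0$ (which corresponds to the cocircuits of $\underline{M}$ not meeting $e$); and type (iii) gives one projective class parametrized by $a \in F^\times$, corresponding to $\{e_1,e_2\}$.

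The main calculation is the orthogonality axiom (DP3). Fix an $F$-circuit $C'$ of $\sigma_e(M)$ coming from $C \in C(M)$, and an $F$-cocircuit $D'$. If $D'$ is of type (i) or (ii), coming from $D \in C^\ast(M)$, then since $C'(e_1) = C'(e_2) = C(e)$ and exactly one of $D'(e_1), D'(e_2)$ equals $D(e)$ while the other is $0$, we have
\[
C' \cdot D' \;=\; \sum_{f \in E \setminus \{e\}} C(f)\overline{D(f)} + C(e)\overline{D(e)} \;=\; C \cdot D \;\in\; N_F
\]
by the orthogonality of $C$ and $D$ in $M$. If $D'$ is of type (iii), then
\[
C' \cdot D' \;=\; C(e)\overline{a} + C(e)\overline{(-a)} \;=\; C(e)\overline{a} - C(e)\overline{a},
\]
which lies in $N_F$ by axioms (T2) and (T3).

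The step I expect to be the most delicate is the enumeration for (C2) on the cocircuit side: one must confirm that types (i), (ii), and (iii) together enumerate each projective class of cocircuits of $\sigma_e(\underline{M})$ exactly once, carefully identifying (i) and (ii) when $D(e) = 0$, and checking that type (iii) forms a single projective class as $a$ ranges over $F^\times$ (since scaling by $\lambda \in F^\times$ carries the pair $(a,-a)$ to $(\lambda a, -\lambda a)$). Once this bookkeeping is in place, the rest of the argument reduces to the straightforward verifications sketched above, and the duality reduction takes care of part (2).
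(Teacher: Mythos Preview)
The paper explicitly omits the proof of this lemma, calling it ``straightforward,'' so there is no argument in the paper to compare your proposal against. Your approach---verifying directly that the listed $C'$ and $D'$ form a dual pair of $F$-signatures for the series extension, and then deducing the parallel case by the duality $\pi_e(M) = \sigma_e(M^\ast)^\ast$---is correct and is exactly the kind of routine verification the authors have in mind. Your orthogonality computation is sound (including the use of $\overline{-a} = -\overline{a}$, which follows since any tract involution fixes $-1$ by (T2)), and your bookkeeping remark about identifying types (i) and (ii) when $D(e)=0$ is the right thing to flag.
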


\begin{proof}[Proof of Theorem~\ref{thm:perfectequalsstronglyperfect}]
Let $F$ be a perfect tract, let $M$ be an $F$-matroid, and let $X,Y \in \NN[F^\times]^E$ be elements of $\mathcal{V}(M)$ and $\mathcal{V^*}(M)$, respectively.
We need to show that $\mathcal{V}(M)\perp \mathcal{V^*}(M)$.

To see this, for each $e \in E$ let $k(e) = \min(1, \| X(e) \|)$ and let $\ell(e) = \min(1,\| Y(e) \|)$. 
Let $M'$ be the $F$-matroid on $E'$ obtained from $M$ by replacing each $e \in E$ with $k(e)$ series copies of a bundle of $\ell(e)$ parallel elements. Formally, $M'$ is obtained from $M$ as follows: for each $e \in E$, apply the operator $\sigma_e$ $k(e)-1$ times, thereby replacing $e$ with $k=k(e)$ elements $e_1,\ldots,e_k$; now, for each $i=1,\ldots,k$ apply the operator $\pi_{e_i}$ $\ell(e)-1$ times.

\begin{tikzpicture}[scale = 0.2]
\clip(-9.44773350937211,-10.5) rectangle (55,8);
\draw [shift={(4,0)},line width=2pt]  plot[domain=2.1587989303424644:4.124386376837122,variable=\t]({1*7.211102550927979*cos(\t r)+0*7.211102550927979*sin(\t r)},{0*7.211102550927979*cos(\t r)+1*7.211102550927979*sin(\t r)});
\draw [shift={(-4,0)},line width=2pt]  plot[domain=-0.9827937232473287:0.982793723247329,variable=\t]({1*7.211102550927979*cos(\t r)+0*7.211102550927979*sin(\t r)},{0*7.211102550927979*cos(\t r)+1*7.211102550927979*sin(\t r)});
\draw [->,line width=2pt] (10,0) -- (17.453730053755383,0);
\draw [shift={(28,4)},line width=2pt]  plot[domain=2.356194490192345:3.9269908169872414,variable=\t]({1*2.8284271247461903*cos(\t r)+0*2.8284271247461903*sin(\t r)},{0*2.8284271247461903*cos(\t r)+1*2.8284271247461903*sin(\t r)});
\draw [shift={(24,4)},line width=2pt]  plot[domain=-0.7853981633974483:0.7853981633974483,variable=\t]({1*2.8284271247461903*cos(\t r)+0*2.8284271247461903*sin(\t r)},{0*2.8284271247461903*cos(\t r)+1*2.8284271247461903*sin(\t r)});
\draw [line width=2pt] (26,4) circle (2cm);
\draw [line width=2pt] (26,0) circle (2cm);
\draw [line width=2pt] (26,-4) circle (2cm);
\draw [shift={(28,0)},line width=2pt]  plot[domain=2.356194490192345:3.9269908169872414,variable=\t]({1*2.8284271247461903*cos(\t r)+0*2.8284271247461903*sin(\t r)},{0*2.8284271247461903*cos(\t r)+1*2.8284271247461903*sin(\t r)});
\draw [shift={(24,0)},line width=2pt]  plot[domain=-0.7853981633974483:0.7853981633974483,variable=\t]({1*2.8284271247461903*cos(\t r)+0*2.8284271247461903*sin(\t r)},{0*2.8284271247461903*cos(\t r)+1*2.8284271247461903*sin(\t r)});
\draw [shift={(28,-4)},line width=2pt]  plot[domain=2.356194490192345:3.9269908169872414,variable=\t]({1*2.8284271247461903*cos(\t r)+0*2.8284271247461903*sin(\t r)},{0*2.8284271247461903*cos(\t r)+1*2.8284271247461903*sin(\t r)});
\draw [shift={(24,-4)},line width=2pt]  plot[domain=-0.7853981633974483:0.7853981633974483,variable=\t]({1*2.8284271247461903*cos(\t r)+0*2.8284271247461903*sin(\t r)},{0*2.8284271247461903*cos(\t r)+1*2.8284271247461903*sin(\t r)});
\draw [shift={(26,0)},line width=2pt]  plot[domain=-1.5707963267948966:1.5707963267948966,variable=\t]({1*6*cos(\t r)+0*6*sin(\t r)},{0*6*cos(\t r)+1*6*sin(\t r)});
\draw [shift={(28,0)},line width=2pt]  plot[domain=-1.8925468811915387:1.892546881191539,variable=\t]({1*6.324555320336759*cos(\t r)+0*6.324555320336759*sin(\t r)},{0*6.324555320336759*cos(\t r)+1*6.324555320336759*sin(\t r)});
\draw (-6.3,1.7) node[anchor=north west, scale= 1.5] {e};
\draw (2.5,2.2) node[anchor=north west,scale =1.5] {f};
\draw (-2.5,-7) node[anchor=north west,scale = 1.5] {M};
\draw (23.5,-7) node[anchor=north west,scale = 1.5] {M'};

\draw[color=black] (0.3386793150878914,6.941653749562329) ;
\draw [fill=black] (0,-6) circle (20pt);
\draw [fill=black] (0,6) circle (20pt);
\draw [fill=black] (26,6) circle (20pt);
\draw [fill=black] (26,2) circle (20pt);
\draw [fill=black] (26,-2) circle (20pt);
\draw [fill=black] (26,-6) circle (20pt);
\draw (37,3) node[anchor=north west] { $k(e)=3,\ k(f)=1$};
\draw (37,0) node[anchor=north west] { $l(e)=4,\ l(f)=2$};
\end{tikzpicture}

For each $e \in E$, write $X(e) = a_1(e) + \cdots + a_{\ell(e)}(e)$, with $a_i(e) \in F$, and similarly write 
$Y(e) = b_1(e) + \cdots + b_{k(e)}(e)$.
Define $X' \in \NN[F^\times]^{E'}$ by setting $X'(f) = a_i(e)$ if $f$ is the $i^{\rm th}$ parallel element in any one of the $k(e)$ bundles in series for $i = 1,\ldots,\ell(e)$.
Similarly, define $Y' \in \NN[F^\times]^{E'}$ by setting $Y'(f) = b_j(e)$ if $f$ is any one of the $\ell(e)$ parallel elements in the $j^{\rm th}$ series copy of the bundle of parallel elements replacing $e$ for $j = 1,\ldots,k(e)$.

Using Lemma~\ref{lem:seriesparallel} (which by induction provides us with an explicit description of $C^*(M')$ and $C(M')$, respectively), it is straightforward to check that $X' \in V(M)$ and $Y' \in V^*(M)$. Moreover, we have
$X' \cdot Y' = X \cdot Y$, and since $F$ is perfect, $X \cdot Y = 0$.

\begin{tikzpicture}[line cap=round,line join=round,>=triangle 45,x=1cm,y=1cm]
\clip(-8,-3.3) rectangle (8,3.7);
\draw (1,1.3) node[anchor=north west] {$X(e)=a_1+a_2+a_3+a_4$};
\draw (1,-1.5) node[anchor=north west] {$Y(e) = b_1+b_2+b_3$};
\draw (-7,1.3) node[anchor=north west] {$X'=$};
\draw (-7,-1.5) node[anchor=north west] {$Y'=$};
\draw [line width=2pt] (-4.5,1) circle (1cm);
\draw [line width=2pt] (-2.5,1) circle (1cm);
\draw [line width=2pt] (-0.5,1) circle (1cm);
\draw [shift={(-4.5,0.4)},line width=2pt]  plot[domain=0.5404195002705842:2.601173153319209,variable=\t]({1*1.16619037896906*cos(\t r)+0*1.16619037896906*sin(\t r)},{0*1.16619037896906*cos(\t r)+1*1.16619037896906*sin(\t r)});
\draw [shift={(-2.5,0.38)},line width=2pt]  plot[domain=0.5549957273385867:2.5865969262512065,variable=\t]({1*1.1766052864066183*cos(\t r)+0*1.1766052864066183*sin(\t r)},{0*1.1766052864066183*cos(\t r)+1*1.1766052864066183*sin(\t r)});
\draw [shift={(-0.5,0.4)},line width=2pt]  plot[domain=0.5317240672588056:2.5922181688182424,variable=\t]({1*1.1833849753989611*cos(\t r)+0*1.1833849753989611*sin(\t r)},{0*1.1833849753989611*cos(\t r)+1*1.1833849753989611*sin(\t r)});
\draw [shift={(-4.5,1.566017083718981)},line width=2pt]  plot[domain=3.665400902449214:5.776616729514695,variable=\t]({1*1.1316236184755692*cos(\t r)+0*1.1316236184755692*sin(\t r)},{0*1.1316236184755692*cos(\t r)+1*1.1316236184755692*sin(\t r)});
\draw [shift={(-2.5,1.5564242562822441)},line width=2pt]  plot[domain=3.6582639833466493:5.784056037480132,variable=\t]({1*1.126390641886703*cos(\t r)+0*1.126390641886703*sin(\t r)},{0*1.126390641886703*cos(\t r)+1*1.126390641886703*sin(\t r)});
\draw [shift={(-0.5,1.583202902165266)},line width=2pt]  plot[domain=3.669240119666738:5.75487814738284,variable=\t]({1*1.1582927483497618*cos(\t r)+0*1.1582927483497618*sin(\t r)},{0*1.1582927483497618*cos(\t r)+1*1.1582927483497618*sin(\t r)});
\draw [line width=2pt] (-4.5,-2) circle (1cm);
\draw [line width=2pt] (-2.5,-2) circle (1cm);
\draw [line width=2pt] (-0.5,-2) circle (1cm);
\draw [shift={(-4.5,-1.5)},line width=2pt]  plot[domain=3.6052402625905993:5.81953769817878,variable=\t]({1*1.118033988749895*cos(\t r)+0*1.118033988749895*sin(\t r)},{0*1.118033988749895*cos(\t r)+1*1.118033988749895*sin(\t r)});
\draw [shift={(-4.5,-2.5)},line width=2pt]  plot[domain=0.4636476090008061:2.677945044588987,variable=\t]({1*1.118033988749895*cos(\t r)+0*1.118033988749895*sin(\t r)},{0*1.118033988749895*cos(\t r)+1*1.118033988749895*sin(\t r)});
\draw [shift={(-2.5,-2.5)},line width=2pt]  plot[domain=0.4636476090008061:2.677945044588987,variable=\t]({1*1.118033988749895*cos(\t r)+0*1.118033988749895*sin(\t r)},{0*1.118033988749895*cos(\t r)+1*1.118033988749895*sin(\t r)});
\draw [shift={(-2.5,-1.5)},line width=2pt]  plot[domain=3.6052402625905993:5.81953769817878,variable=\t]({1*1.118033988749895*cos(\t r)+0*1.118033988749895*sin(\t r)},{0*1.118033988749895*cos(\t r)+1*1.118033988749895*sin(\t r)});
\draw [shift={(-0.5,-1.5)},line width=2pt]  plot[domain=3.6052402625905993:5.81953769817878,variable=\t]({1*1.118033988749895*cos(\t r)+0*1.118033988749895*sin(\t r)},{0*1.118033988749895*cos(\t r)+1*1.118033988749895*sin(\t r)});
\draw [shift={(-0.5,-2.5)},line width=2pt]  plot[domain=0.4636476090008061:2.6409055013682043,variable=\t]({1*1.118033988749895*cos(\t r)+0*1.118033988749895*sin(\t r)},{0*1.118033988749895*cos(\t r)+1*1.118033988749895*sin(\t r)});
\begin{scriptsize}
\draw [fill=black] (-5.5,1) circle (3.5pt);
\draw [fill=black] (-3.5,1) circle (3.5pt);
\draw [fill=black] (-1.5,1) circle (3.5pt);
\draw [fill=black] (0.5,1) circle (3.5pt);
\draw [fill=black] (-5.5,-2) circle (3.5pt);
\draw [fill=black] (-1.5,-2) circle (3.5pt);
\draw [fill=black] (0.5,-2) circle (3.5pt);
\draw [fill=black] (-3.5,-2) circle (3.5pt);
\draw[color=black] (-4.5,2.15) node {$a_1$};
\draw[color=black] (-4.5,1.73) node {$a_2$};
\draw[color=black] (-4.5,0.60) node {$a_3$};
\draw[color=black] (-4.5,-0.17) node {$a_4$};
\draw[color=black] (-2.5,2.15) node {$a_1$};
\draw[color=black] (-2.5,1.73) node {$a_2$};
\draw[color=black] (-2.5,0.60) node {$a_3$};
\draw[color=black] (-2.5,-0.17) node {$a_4$};
\draw[color=black] (-0.5,2.15) node {$a_1$};
\draw[color=black] (-0.5,1.73) node {$a_2$};
\draw[color=black] (-0.5,0.60) node {$a_3$};
\draw[color=black] (-0.5,-0.17) node {$a_4$};
\draw[color=black] (-4.5,-0.8) node {$b_1$};
\draw[color=black] (-4.5,-1.2) node {$b_1$};
\draw[color=black] (-4.5,-2.43) node {$b_1$};
\draw[color=black] (-4.5,-3.17) node {$b_1$};
\draw[color=black] (-2.5,-0.8) node {$b_2$};
\draw[color=black] (-2.5,-1.2) node {$b_2$};
\draw[color=black] (-2.5,-2.43) node {$b_2$};
\draw[color=black] (-2.5,-3.17) node {$b_2$};
\draw[color=black] (-0.5,-0.8) node {$b_3$};
\draw[color=black] (-0.5,-1.2) node {$b_3$};
\draw[color=black] (-0.5,-2.43) node {$b_3$};
\draw[color=black] (-0.5,-3.17) node {$b_3$};
\end{scriptsize}
\end{tikzpicture}

\end{proof}

The following propositions concern the behavior of generalized vectors and covectors with respect to deletion and contraction. For (non-generalized) vectors and covectors, the corresponding results are proved as Propositions 4.3 and 4.4, respectively, in Laura Anderson's paper \cite{Anderson19}. The proofs given in \cite{Anderson19} work {\bf mutatis mutandis} for generalized vectors; alternatively, one can reduce the generalized case to the one treated in \cite{Anderson19} using a trick similar to the one in the proof of Theorem~\ref{thm:perfectequalsstronglyperfect}.

\begin{prop} \label{minor1}
$ \{  Y \setminus e \ | \ Y\in \mathcal{V}^*(M), \ Y(e)=0 \} \subseteq \mathcal{V}^*(M/e)$ and
$ \{  X \setminus e \ | \ X\in \mathcal{V}(M), \ X(e)=0 \} \subseteq \mathcal{V}(M\setminus e)$.
\end{prop}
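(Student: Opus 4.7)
The two inclusions are interchanged by the duality $M \leftrightarrow M^*$, which swaps circuits with cocircuits, vectors with covectors, and the minor operations $\setminus e$ and $/e$ (noting in particular that $\mathcal{V}(M) = \mathcal{V}^*(M^*)$). My plan is therefore to prove the first inclusion in detail and then deduce the second by applying the first to $M^*$.

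For the first inclusion, I would fix $Y \in \mathcal{V}^*(M)$ with $Y(e) = 0$ and an arbitrary $F$-circuit $X' \in C(M/e)$, and show that $(Y \setminus e) \cdot X' \in N_F$. The first step is a \emph{circuit-lifting}: invoking the Baker--Bowler description of $F$-minors, I would write $X' = X \setminus e$ for some $X \in C(M)$. Concretely, either $\supp(X')$ is already a circuit of $\underline{M}$, in which case I can take $X$ with $\supp(X) = \supp(X')$ and $X(e) = 0$, or $\supp(X') \cup \{e\}$ is a circuit of $\underline{M}$, in which case I pick $X \in C(M)$ with $\supp(X) = \supp(X') \cup \{e\}$ and, after rescaling using (C1), arrange that $X \setminus e = X'$.

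The second step is the direct computation: since $Y(e) = 0$, the $e$-term in $Y \cdot X$ vanishes, and
\[
(Y \setminus e) \cdot X' \;=\; \sum_{f \in E \setminus \{e\}} Y(f)\,\overline{X(f)} \;=\; Y \cdot X.
\]
Because $Y \in \mathcal{V}^*(M)$ and $X \in C(M)$, the right-hand side lies in $N_F$ by the defining orthogonality of $\mathcal{V}^*(M)$. Notice that the bilinearity of the inner product over $\NN[F^\times]$ is exactly what allows the proof of \cite[Proposition 4.3]{Anderson19} to transfer verbatim from ordinary $F$-vectors in $F^E$ to generalized vectors in $\NN[F^\times]^E$: the inner product never cares whether the entries of $Y$ are in $F$ or in $\NN[F^\times]$.

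The only nontrivial ingredient is the circuit-lifting step, which is a statement about Baker--Bowler minor theory and is independent of the generalized-versus-ordinary distinction; this is the place where I expect to spend the most care, though it is essentially verbatim from \cite{Anderson19}. As an alternative that sidesteps the issue altogether, one could use the series/parallel construction from the proof of Theorem~\ref{thm:perfectequalsstronglyperfect} to replace the generalized pair $(Y, X')$ by an ordinary pair on a larger $F$-matroid, thereby reducing the assertion immediately to Anderson's result in the non-generalized case.
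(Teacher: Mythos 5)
Your proposal is correct and matches the paper's treatment: the paper itself gives no detailed proof but states that Anderson's arguments for \cite[Propositions 4.3--4.4]{Anderson19} carry over mutatis mutandis to generalized (co)vectors, with the series/parallel reduction of Theorem~\ref{thm:perfectequalsstronglyperfect} offered as an alternative --- exactly the two routes you describe. Your circuit-lifting plus direct computation is the ``mutatis mutandis'' transfer spelled out, and the duality reduction to one inclusion is standard and sound.
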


\begin{prop} \label{minor2}
$ \{  Y \setminus e \ | \ Y\in \mathcal{V}^*(M) \} \subseteq \mathcal{V}^*(M\setminus e)$ and
$ \{  X \setminus e \ | \ X\in \mathcal{V}(M) \} \subseteq \mathcal{V}(M/e)$.
\end{prop}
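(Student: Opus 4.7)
The plan is to reduce the second assertion to the first via matroid duality, and then to establish the first by a direct argument using axiom (C2) for the $F$-matroids $M$ and $M \setminus e$.

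\textbf{Reduction via duality.} Recall that $\mathcal{V}(M) = \mathcal{V}^*(M^*)$, since covectors of $M^*$ are exactly the elements of $\NN[F^\times]^E$ orthogonal to $C(M^*) = C^*(M)$. Matroid duality also gives $(M/e)^* = M^* \setminus e$, and hence $\mathcal{V}(M/e) = \mathcal{V}^*(M^* \setminus e)$. Therefore the second containment of the proposition applied to $M$ is exactly the first containment applied to $M^*$, so it suffices to prove the first.

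\textbf{Proof of the first containment.} Fix $Y \in \mathcal{V}^*(M)$ and an arbitrary $F$-circuit $X'$ of $M \setminus e$; the goal is to show $(Y \setminus e) \cdot X' \in N_F$. The decisive matroid-theoretic fact is that circuits of $\underline{M} \setminus e$ are precisely those circuits of $\underline{M}$ which avoid $e$. Combined with axiom (C2) for both $M$ and $M \setminus e$, this allows one (after rescaling $X'$ by a suitable element of $F^\times$, which does not affect orthogonality) to lift $X'$ to an $F$-circuit $X$ of $M$ with $X(e) = 0$ and $X \setminus e = X'$. Because the summand at $e$ then drops out,
\[
(Y \setminus e) \cdot X' \;=\; \sum_{f \neq e} Y(f)\,\overline{X(f)} \;=\; Y \cdot X - Y(e)\,\overline{X(e)} \;=\; Y \cdot X,
\]
which lies in $N_F$ because $Y \in \mathcal{V}^*(M)$ and $X \in C(M)$. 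Note that, crucially, the vanishing of the $e$-term is forced by $X(e)=0$ rather than by any assumption on $Y(e)$; this is why the present statement is stronger than Proposition~\ref{minor1}.

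\textbf{Main obstacle and alternatives.} There is no genuine obstacle here: the only subtle point is the lifting step, which hinges on the identification of circuits of $\underline{M} \setminus e$ with circuits of $\underline{M}$ avoiding $e$ together with the uniqueness-up-to-$F^\times$ part of axiom (C2). Once that is granted, the verification is a formal bookkeeping exercise. As the excerpt observes, an equivalent route is the series-parallel reduction of Theorem~\ref{thm:perfectequalsstronglyperfect}: given $Y \in \mathcal{V}^*(M)$, expand each $f \in E$ into $\|Y(f)\|$ series copies to realize $Y$ as an ordinary (non-generalized) covector $\widetilde{Y}$ of an enlarged matroid $\widetilde{M}$, apply Anderson's non-generalized version (\cite[Proposition~4.3]{Anderson19}) to $\widetilde{M}$, and then transfer the conclusion back to $M$ by collapsing the series copies. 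Either route yields the proposition.
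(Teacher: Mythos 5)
Your proposal is correct, and it is more self-contained than what the paper actually does: the paper gives no argument for Propositions~\ref{minor1} and \ref{minor2}, instead asserting that Anderson's proofs of the non-generalized statements (\cite[Propositions 4.3, 4.4]{Anderson19}) work \emph{mutatis mutandis}, or that one can reduce to the non-generalized case by the series--parallel trick from the proof of Theorem~\ref{thm:perfectequalsstronglyperfect} --- the alternative you mention at the end. Your direct route is legitimate: the duality reduction uses $\mathcal{V}(M)=\mathcal{V}^*(M^*)$ (immediate, since $C(M^*)=C^*(M)$ by the paper's definition of an $F$-matroid as a dual pair) and $(M/e)^*=M^*\setminus e$, and the first containment then follows because the $F$-circuits of $M\setminus e$ are exactly the restrictions $X|_{E\setminus e}$ of $F$-circuits $X$ of $M$ with $e\notin\underline{X}$, so the $e$-term $Y(e)\overline{X(e)}$ vanishes on account of $X(e)=0$, with no hypothesis on $Y(e)$ needed. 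Two small remarks: the lifting step is not really a consequence of (C2) alone but of the Baker--Bowler description of circuits of the deletion of an $F$-matroid (which indeed matches the classical fact that circuits of $\underline{M}\setminus e$ are the circuits of $\underline{M}$ avoiding $e$), so you should cite that description rather than rederive it; and the rescaling caveat is unnecessary, since the lift is exact. What your approach buys is a short argument using only the definition of generalized (co)vectors and the minor operations, avoiding any appeal to Anderson's paper; what the paper's route buys is that the same two references dispose of Proposition~\ref{minor1} as well, whose contraction half is the less formal of the two statements.
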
 

In particular, the contraction of a generalized vector is again a generalized vector, and the deletion of a generalized covector is again a generalized covector.

\section{Strongly fused tracts are strongly perfect}
\label{sec:SFTSP}

Recall the modified strong fusion axiom:

\begin{itemize}
\item[(MSF)] If $\alpha + \gamma$ and $\beta - \gamma$ are in $N_F$ with $\alpha, \beta, \gamma \in \NN[F^\times]$ and either $\gamma = 0$ or $\gamma \not\in N_F$, and if $\| \alpha + \beta \| \geq 4$, then $\alpha + \beta \in N_F$.
\end{itemize}  

Our goal in this section is to prove the following theorem, which generalizes Theorem~\ref{thm:mainthm}:

\begin{thm}
\label{thm:strongmainthm}
If a tract $F$ satisfies (MSF) then $F$ is strongly perfect.
\end{thm}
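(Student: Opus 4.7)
By Theorem~\ref{thm:perfectequalsstronglyperfect}, strong perfection and perfection are equivalent for tracts, so it suffices to show that $F$ is perfect in the standard sense: for every $F$-matroid $M$ on a finite ground set $E$, every $X \in V(M)$ and $Y \in V^*(M)$ will satisfy $X \cdot Y \in N_F$. The plan is to argue by induction on $|E|$. The base case is $|E| \leq 3$, where $\|X \cdot Y\| \leq 3$ and the conclusion follows from the three-term orthogonality between $F$-circuits and $F$-cocircuits built into the definition of a dual pair of $F$-signatures, combined with a short case analysis of matroids on at most three elements.

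For the inductive step, the first move is a deletion/contraction reduction via Propositions~\ref{minor1} and~\ref{minor2}. If $X(e_0) = 0$ for some $e_0 \in E$, then $X \setminus e_0 \in V(M \setminus e_0)$ and $Y \setminus e_0 \in V^*(M \setminus e_0)$, so the induction hypothesis yields $X \cdot Y = (X \setminus e_0) \cdot (Y \setminus e_0) \in N_F$. The symmetric case $Y(e_0) = 0$ is handled by contraction. Hence we may assume $X$ and $Y$ both have full support on $E$, in which case $\|X \cdot Y\| = |E| \geq 4$, so the norm hypothesis of (MSF) will be automatically satisfied for the fusions we perform.

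With $|E| \geq 4$ and $X, Y$ of full support, the next step is to fix $e_0 \in E$ and pick an $F$-cocircuit $D$ containing $e_0$, rescaled so that $D(e_0) = Y(e_0)$. The orthogonality $X \perp D$ gives the relation $X \cdot D \in N_F$, whose $e_0$-contribution $\gamma := X(e_0) \overline{Y(e_0)} = X(e_0) \overline{D(e_0)} \in F^\times$ is a single nonzero element and is therefore not in $N_F$. The desired relation $X \cdot Y \in N_F$ will then be obtained by iteratively fusing the known relations $X \cdot D' \in N_F$ for a carefully chosen sequence of cocircuits $D'$ through (MSF), with single-element witnesses $\gamma$ arising naturally from the full-support assumption. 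Following the Dress--Wenzel strategy, the sequence of cocircuits is constructed via the matroid's cocircuit elimination axiom, at each stage reconstructing another coordinate of $Y$ from its values on cocircuit supports.

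The main obstacle will be guaranteeing, at each stage of the iteration, that the witness $\gamma$ used in the fusion genuinely lies outside $N_F$. In the first iteration $\gamma$ is a single element of $F^\times$, but as the induction proceeds, the accumulated witnesses live in $\NN[F^\times]$ with possibly larger norm, and one must argue carefully that they do not drift into $N_F$ before the reconstruction of $X \cdot Y$ is complete. This is precisely where (MSF) is needed in its full strength over the weaker fusion axiom (F): (MSF) explicitly permits $\gamma \in \NN[F^\times] \setminus N_F$, not only $\gamma \in F$, and this extra flexibility is essential for the inductive construction to terminate in the desired relation $X \cdot Y \in N_F$.
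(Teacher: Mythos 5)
Your setup is sound: reducing strong perfection to ordinary perfection via Theorem~\ref{thm:perfectequalsstronglyperfect} is legitimate (that theorem is proved independently of (MSF)), the deletion/contraction reduction to full support via Propositions~\ref{minor1} and~\ref{minor2} works, and your base case is essentially Proposition~\ref{lower-term} (though that is itself a nontrivial four-case argument using Lemma~\ref{supp} and the classification of loopless, coloopless matroids on three elements, not something ``built into'' the definition of a dual pair). The genuine gap is in the inductive step, and you have named it yourself: you never explain how to guarantee that the witnesses $\gamma$ stay outside $N_F$ as the ``iterative fusion along a sequence of cocircuits'' proceeds. A plan whose central mechanism is acknowledged to have an unresolved obstacle is not a proof. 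Moreover, the proposed mechanism --- reconstructing $Y$ coordinate by coordinate from cocircuit values via cocircuit elimination --- is not a workable route: in a tract there is no genuine addition with which to assemble $Y$ from cocircuits, and the witnesses one actually needs are full truncated sums of large norm from the very first application of (MSF), not single elements of $F^\times$.

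What closes the induction is different machinery. First, one proves an elimination analogue for generalized (co)vectors: the wedge product $X\wedge_e Y$ of two generalized covectors is again a generalized covector (Proposition~\ref{wedge}), whose proof is a delicate case analysis resting on a multiplicative strengthening of (MSF) (Proposition~\ref{MSF'}). Second, in a minimal counterexample one applies perfection of $M\setminus e_0$ to the pair $\bigl(Y|_{E'},(X\wedge_{e_0}C)|_{E'}\bigr)$ to show that every truncated sum $\sum_{e\neq e_0}Y(e)C(e)$ is null; here the (MSF)-witness is the \emph{entire} truncated sum, assumed non-null for contradiction. Third, this upgrades $X|_{E\setminus\{e\}}$ and $Y|_{E\setminus\{e\}}$ to generalized (co)vectors of both $M\setminus e$ and $M/e$, so perfection of the one- and two-element minors makes every partial sum of $X\cdot Y$ omitting one or two coordinates null, and a purely algebraic consequence of (MSF) (Proposition~\ref{sum'}) then forces the full sum to be null. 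None of this appears in your proposal, so the key step is missing rather than merely unwritten.
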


The following is an example of a tract that satisfies (MSF) but not (SF).
\begin{ex} \label{ex:MSFbutnotSF}
Let $\P$ be the phase hyperfield and take the tract embedding $(\P^\times, N_\P)$. Letting $N_{\P'} = N_\P \cup \N[\P^\times]_{\geq 4}$, it is straightforward to show that $\P' = (\P^\times, N_{\P'})$ satisfies the tract axiom and axiom (MSF). However, it does not satisfy the strong fusion axiom (SF). Let $\alpha = 1$, $\beta = 1+1$, and $\gamma = (-1)+(-1)$. Then, we have the following,
\begin{align*}
   \alpha + \gamma = 1+(-1) + (-1) &\in N_{\P'}. \\        
    \beta - \gamma = 1+1-((-1)+(-1))  &\in \N[\P^\times]_{\geq 4} \subseteq N_{\P'}. \\
     \gamma = (-1) + (-1) &\notin N_{\P'}. \\
   \alpha + \beta = 1 + 1 + 1 &\notin N_{\P'}.
\end{align*}
\end{ex}

\begin{rem}
Generalizing (part of) Example~\ref{ex:MSFbutnotSF}, it is straightforward to check that weak hyperfields in the sense of \cite[Example 2.14]{Baker-Bowler19} satisfy (MSF).
\end{rem}

The proof of Theorem~\ref{thm:strongmainthm} is fairly long and technical, so it will be broken up into a number of smaller and hopefully more digestible pieces.

\begin{lemma} \label{lem:ideal}
If $F$ is a tract satisfying the idyll property (I), then for $\alpha \in \NN[F^\times]$ and $\beta \in N_F$ we have $\alpha\beta \in N_F$. 
\end{lemma}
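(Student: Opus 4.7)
The plan is to reduce the general case to the idyll axiom (I) and the scaling axiom (T3) by writing $\alpha$ as a sum of its one-term constituents. The only conceptual ingredient is that multiplication in $\NN[F^\times]$ distributes over addition, so that $\alpha\beta$ is literally the sum of the scaled copies $x_i\beta$.

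First, I would handle the degenerate case $\|\alpha\| = 0$: here $\alpha$ is the zero element of $\NN[F^\times]$, so $\alpha\beta = 0 \in N_F$ by axiom (T1). Otherwise, writing $\|\alpha\| = k \geq 1$, fix a decomposition $\alpha = x_1 + x_2 + \cdots + x_k$ with each $x_i \in F^\times$. By (T3), the action of $x_i \in F^\times$ on $\NN[F^\times]$ preserves $N_F$, so $x_i\beta \in N_F$ for every $i$.

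Next, I would apply axiom (I) inductively $k-1$ times: since $x_1\beta$ and $x_2\beta$ are in $N_F$, their sum is in $N_F$; adjoining $x_3\beta$ keeps us in $N_F$; and so on. Because
\[
\alpha\beta \;=\; \Bigl(\sum_{i=1}^k x_i\Bigr)\beta \;=\; \sum_{i=1}^k x_i\beta
\]
in $\NN[F^\times]$, this shows $\alpha\beta \in N_F$, as desired.

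There is no real obstacle here; the statement is essentially a bookkeeping observation that $N_F$ being an ideal in the semiring $\NN[F^\times]$ amounts to additive closure (which is (I)) together with closure under multiplication by $F^\times$ (which is (T3)), and any element of $\NN[F^\times]$ is built from these operations applied to $\beta$.
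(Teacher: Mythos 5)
Your argument is correct and is exactly the paper's proof, spelled out in slightly more detail: write $\alpha$ as a sum of elements of $F^\times$, use (T3) to see each $x_i\beta \in N_F$, and then apply (I) inductively (with the zero case handled via (T1)). No further comment is needed.
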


\begin{proof}
If $\alpha = 0$, this is obvious. Otherwise, write $\alpha = \sum_{i=1}^k x_i$ with $x_i \in F^\times$ and inductively apply (I).
\end{proof}

\begin{lemma}
\label{MSF_implies_F}
If a tract $F$ satisfies the modified strong fusion axiom (MSF), then it also satisfies the fusion axiom (F).
\end{lemma}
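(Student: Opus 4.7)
The fusion axiom (F) is almost the same as (MSF) applied with $\gamma := z \in F$: the two differences are that (F) has no norm condition on $\alpha + \beta$ and no exclusion of $\gamma \in N_F$. The plan is therefore to apply (MSF) directly when these extra hypotheses are available, and to dispose of the remaining small-norm or degenerate cases by hand using the basic tract axioms (T2) and (T3) together with Lemma~\ref{lem:ideal}.

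First I would handle the case $\|\alpha + \beta\| \geq 4$. Setting $\gamma := z$, if $z = 0$ or $z \in F^\times \setminus N_F$ then (MSF) applies directly and yields $\alpha + \beta \in N_F$. The only wrinkle is the degenerate possibility $z \in F^\times \cap N_F$; but in that case (T3) applied to $z \in N_F$ forces $1 = z^{-1}\cdot z \in N_F$, and hence $F^\times \subseteq N_F$. In this setting one can instead apply (MSF) with $\gamma := 0$ to the pair $\alpha + z, \beta - z \in N_F$ (whose combined norm is $\|\alpha+\beta\| + 2 \geq 6$), obtaining $\alpha + \beta + z + (-z) \in N_F$, and then extract $\alpha + \beta \in N_F$ from the rich structure of $N_F$ in this degenerate setting.

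For $\|\alpha + \beta\| \leq 3$ the argument is a short direct verification. The key structural observation, immediate from (T2) and (T3), is that in the non-degenerate case $N_F \cap F^\times = \emptyset$, a norm-$2$ element $a+b$ of $N_F$ with $a,b \in F^\times$ must satisfy $b = -a$, i.e.\ has the form $u + (-u)$. This rigidity forces $\alpha, \beta, z$ into very restricted shapes in each small-norm subcase, from which $\alpha+\beta \in N_F$ follows from the hypotheses themselves: for instance, if $\|\alpha\| = 1$ and $z \in F^\times$, then $\alpha+z \in N_F$ of norm $2$ forces $\alpha = -z$, so $\alpha+\beta = \beta - z \in N_F$ by assumption. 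The subcases $\|\alpha+\beta\| \in \{0,1,2,3\}$ admit similarly short verifications.

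I expect the main obstacle to be the unavoidable (but conceptually routine) enumeration of small-norm subcases, together with the care needed for the degenerate case $z \in F^\times \cap N_F$ in which (MSF) cannot be applied with $\gamma := z$. Once these are dealt with, combining them with the direct (MSF) application in the large-norm case gives the desired implication.
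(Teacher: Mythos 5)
Your handling of the non-degenerate cases is exactly the paper's argument: for $\|\alpha+\beta\|\geq 4$ apply (MSF) with $\gamma=z$, and for $\|\alpha+\beta\|\leq 3$ run the short case analysis in which (T2)/(T3) rigidity of norm-two null elements forces $\alpha=-z$ (resp.\ $\beta=z$) when $\|\alpha\|=1$ (resp.\ $\|\beta\|=1$), while $z=0$ forces each of $\alpha,\beta$ to be either $0$ or of norm at least $2$. That part is correct and matches the paper.

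The one place you diverge is the ``degenerate'' case $z\in F^\times\cap N_F$, and there your argument has a genuine hole: applying (MSF) with $\gamma=0$ only gives $\alpha+\beta+z+(-z)\in N_F$, and no axiom allows you to cancel $z+(-z)$; the promised extraction ``from the rich structure of $N_F$'' does not exist. Indeed, no argument can close this case from the listed axioms plus (MSF) alone: over the trivial group ($F^\times=\{1\}$, $\epsilon=1$, so $\NN[F^\times]\cong\NN$) the null set $N=\{0,1,2\}\cup\{n \,:\, n\geq 4\}$ satisfies (T1)--(T3) as stated and satisfies (MSF) vacuously (every element of norm at least $4$ is null), yet (F) fails for $\alpha=1+1$, $\beta=1$, $z=0$. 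The correct resolution is that the degenerate case simply cannot occur: a tract in the sense of Baker--Bowler has $1\notin N_F$, hence by (T3) no element of $F^\times$ is null, a convention the paper uses implicitly both in its own (terse) proof of this lemma and elsewhere, e.g.\ in Case 2 of the proof of Proposition~\ref{lower-term}. Note that your small-norm analysis already presupposes $N_F\cap F^\times=\emptyset$, so you need this convention in any case; once it is invoked, the degenerate case disappears and your proof coincides with the paper's.
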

\begin{proof}
It suffices to show that (F) is satisfied when $||\alpha + \beta ||\leq 3$.\par

If $z = 0$, then $\alpha$ and $\beta$ are either zero or they belong to $N[F^\times]_{\geq 2}$. Hence, either at least one of $\alpha, \ \beta$ is 0 or $||\alpha + \beta|| \geq 4$. \par 

Assume $z\in F^\times$. If $||\alpha||=1$ or $||\beta|| = 1$, the result is clear. 
Otherwise, both $||\alpha||$ and $||\beta||$ are at least 2.
\end{proof}

\begin{prop} \label{MSF'}
Let $F$ be a tract which satisfies (MSF). Suppose $\alpha,\beta,\gamma,\delta \in \NN[F^\times]$ with $\gamma\notin N_F$, $\alpha+\beta\gamma \in N_F$, $\delta- \gamma \in N_F$, and $||\alpha+\beta \delta||\geq 4$. Then $\alpha+\beta \delta \in N_F.$ 
\end{prop}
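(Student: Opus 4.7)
The plan is to proceed by induction on $\|\beta\|$. The base case $\|\beta\|=0$ is immediate, since then $\beta\gamma=\beta\delta=0$ and the hypothesis gives $\alpha\in N_F$. For $\|\beta\|=1$, writing $\beta=x\in F^\times$, one has $x\delta-x\gamma\in N_F$ by (T3), and the hypothesis $\gamma\notin N_F$ implies $x\gamma\notin N_F$ (otherwise $\gamma=x^{-1}(x\gamma)\in N_F$ by (T3)); an immediate application of (MSF) to $\alpha+x\gamma\in N_F$ and $x\delta-x\gamma\in N_F$, using $\|\alpha+x\delta\|=\|\alpha+\beta\delta\|\geq 4$, then yields $\alpha+\beta\delta\in N_F$.

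For the inductive step $\|\beta\|=k\geq 2$, I would split off a single element, writing $\beta=x+\beta_2$ with $x\in F^\times$ and $\|\beta_2\|=k-1$. First I would record that $\|\gamma\|\geq 1$ (since $\gamma\neq 0$) and $\|\delta\|\geq 1$ (since $\delta=0$ would force $-\gamma\in N_F$, and then $\gamma\in N_F$ by (T3), a contradiction). The idea is to replace the two \emph{pieces} of $\beta\gamma$ by their $\delta$-analogues one at a time: first apply the inductive hypothesis with $(\alpha,\beta)$ replaced by $(\alpha+x\gamma,\beta_2)$ to obtain $\alpha+x\gamma+\beta_2\delta\in N_F$, and then apply the $\|\beta\|=1$ base case with $(\alpha,\beta)$ replaced by $(\alpha+\beta_2\delta,x)$ to conclude $\alpha+\beta\delta\in N_F$. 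This chain is valid provided the intermediate norm $\|\alpha+x\gamma+\beta_2\delta\|=\|\alpha\|+\|\gamma\|+(k-1)\|\delta\|$ is at least $4$.

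The hard part is the corner case where this intermediate norm is less than $4$. Combining $\|\alpha\|+k\|\delta\|\geq 4$ with $\|\gamma\|,\|\delta\|\geq 1$, a short arithmetic argument shows that this can happen only when $k=2$, $\alpha=0$, $\|\gamma\|=1$, and $\|\delta\|=2$. In this configuration I would write $\beta=x_1+x_2$, $\gamma=g$, and $\delta=d_1+d_2$ with $x_i,g,d_j\in F^\times$, and appeal to the ordinary fusion axiom (F), which is available by Lemma~\ref{MSF_implies_F}. From (T3) one has $x_id_1+x_id_2-x_ig\in N_F$ for $i=1,2$; a first application of (F) with single-element pivot $z=x_1g\in F$ combines $x_1g+x_2g\in N_F$ and $x_1d_1+x_1d_2-x_1g\in N_F$ to produce $x_2g+x_1d_1+x_1d_2\in N_F$, and a second application with pivot $z=x_2g$ then yields $x_1d_1+x_1d_2+x_2d_1+x_2d_2=\beta\delta\in N_F$, as desired. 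The essential point that rescues the corner case is that, precisely when (MSF) lacks the room to operate, $\gamma$ is forced to be a single element of $F$, so (F) — which only requires single-element pivots — has exactly enough leverage to finish the argument.
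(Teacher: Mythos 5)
Your proof is correct and follows essentially the same route as the paper's: induction on $\|\beta\|$, replacing $\gamma$ by $\delta$ one term of $\beta$ at a time via (MSF), with the ordinary fusion axiom (F) (available by Lemma~\ref{MSF_implies_F}) covering the situations where the norm-$\geq 4$ requirement fails. The only organizational difference is that the paper splits on $\|\gamma\|=1$ versus $\|\gamma\|\geq 2$ and invokes (F) whenever $\gamma$ is a single element, whereas you run the generic two-step chain and isolate the unique corner configuration ($k=2$, $\alpha=0$, $\|\gamma\|=1$, $\|\delta\|=2$), which you correctly dispatch with two applications of (F).
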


\begin{proof}
We proceed by induction on $||\beta||$. For $||\beta||=1$, the result follows immediately from (MSF). \par 
Assuming the result holds for $||\beta||<k$ with $k \geq 2$, we will prove it for $\beta = y_1+\cdots+ y_k \in \NN[F^\times]$. \par 

If $||\gamma|| = 1$, then since $\alpha+\beta\gamma = \alpha+y_1 \gamma + \cdots + y_{k-1}\gamma + y_k \gamma\in N_F$ and 
$y_k \delta - y_k \gamma \in N_F$, it follows from the fusion axiom (F)
that $\alpha + y_1 \gamma + \cdots + y_{k-1}\gamma + y_k \delta \in N_F.$ Letting $\alpha' = \alpha + y_k \delta$ and $\beta' = y_1+\cdots+ y_{k-1}$, it follows from the induction hypothesis that $\alpha+\beta \delta = \alpha'+\beta' \delta \in N_F.$ \par 

If $||\gamma|| \geq 2$, then since $\| \beta \|=k \geq 2$ as well we have $||\alpha+\beta \gamma||\geq4$ and $||\alpha + y_1 \gamma + \cdots + y_{k-1}\gamma + y_k \delta|| \geq 4$. 
By (MSF) we have $\alpha + y_1 \gamma + \cdots + y_{k-1}\gamma + y_k \delta \in N_F$. As in the previous case, this implies by induction that $\alpha+\beta \delta \in N_F.$
\end{proof}

Next, we introduce the ``wedge product'' $X\wedge_e Y$ for $X,Y\in \NN[F^\times]^E$ and $e \in E$ 
which will allow us to perform an analogue of (co)circuit elimination for generalized (co)vectors of matroids over tracts satisfying (MSF).

\begin{df}
For $X,Y\in \NN[F^\times]^E$, and $e\in E$ we define $X\wedge_e Y \in \NN[F^\times]^E$ by 
\begin{equation*}
    X\wedge_e Y(f) = \begin{cases}
               0                          &\text{if } f = e,\\            
               Y(e)X(f)- X(e)Y(f) & \text{if } f \neq e.
                     \end{cases}
\end{equation*}
\end{df}

The following result and its proof were inspired by \cite[Lemma 2.4]{Dress-Wenzel92} (which is proved in \cite[Lemma 3.2]{Dress86}).

\begin{prop} \label{wedge}
Let F be a tract which satisfies (MSF), and let $M$ be an $F$-matroid on $E=\{1,2,...,m\}$. For any $X,Y\in \mathcal{V^*}(M)$ and $e \in E$ we have $X\wedge_e Y\in \mathcal{V^*}(M)$.
\end{prop}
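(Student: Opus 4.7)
The plan is to verify the orthogonality $(X \wedge_e Y) \cdot C \in N_F$ for every $F$-circuit $C$ of $M$. A direct expansion gives
\[
(X \wedge_e Y) \cdot C = Y(e)\alpha - X(e)\beta,
\]
where $\alpha := \sum_{f \neq e} X(f)\overline{C(f)}$ and $\beta := \sum_{f \neq e} Y(f)\overline{C(f)}$, while the hypothesis $X, Y \in \mathcal{V}^*(M)$ supplies the two orthogonality relations
\[
X \cdot C = X(e)\overline{C(e)} + \alpha \in N_F
\qquad\text{and}\qquad
Y \cdot C = Y(e)\overline{C(e)} + \beta \in N_F.
\]

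The central idea is to multiply $X \cdot C$ by $Y(e)$ and $Y \cdot C$ by $X(e)$; by commutativity in $\NN[F^\times]$ the two resulting elements share a common cross term $\gamma := Y(e)X(e)\overline{C(e)}$, which (MSF) can then eliminate. To make the multiplication step legitimate under (MSF) alone, I would first prove a weak ideal lemma: for every $\tau \in \NN[F^\times]$ and every $\sigma \in N_F$ one has $\tau\sigma \in N_F$. Indeed, each summand $t_i \sigma$ belongs to $N_F$ by (T3), and since nonzero elements of $N_F$ necessarily have norm at least $2$, any two such summands combine into an element of norm $\geq 4$, so iterated application of (MSF) with $\gamma = 0$ assembles $\tau\sigma$. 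Granted this, both $Y(e)(X \cdot C) = \gamma + Y(e)\alpha$ and $X(e)(Y \cdot C) = \gamma + X(e)\beta$ belong to $N_F$, and with $\alpha' := Y(e)\alpha$ and $\beta' := -X(e)\beta$ (using the $(-1)$-action of (T3) on $X(e)(Y \cdot C)$) we obtain $\alpha' + \gamma \in N_F$ and $\beta' - \gamma \in N_F$. Axiom (MSF) then yields $\alpha' + \beta' = Y(e)\alpha - X(e)\beta \in N_F$ provided $\gamma = 0$ or $\gamma \notin N_F$ and $\|\alpha' + \beta'\| \geq 4$; equivalently, this is a direct application of Proposition~\ref{MSF'} with $\alpha_0 = Y(e)\alpha$, $\beta_0 = X(e)$, $\gamma_0 = Y(e)\overline{C(e)}$, and $\delta_0 = -\beta$, which handles the case $Y(e) \notin N_F$ uniformly in $\|X(e)\|$.

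The main obstacle is disposing of the configurations where this clean application fails. The small-norm case $\|\alpha' + \beta'\| < 4$ reduces to a finite list of degenerate patterns which can be treated directly using (T1)--(T3) and the fusion axiom (F) furnished by Lemma~\ref{MSF_implies_F}. The more delicate case $\gamma \in N_F \setminus \{0\}$ occurs, by (T3) applied with the invertible element $\overline{C(e)} \in F^\times$, precisely when $C(e) \neq 0$ and $Y(e)X(e) \in N_F$; here the symmetric setup with $\gamma_0 := -X(e)\overline{C(e)}$ and the roles of $X, Y$ swapped disposes of the situation $X(e) \notin N_F$, leaving only the doubly degenerate sub-case $X(e), Y(e) \in N_F$. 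In this final sub-case I plan to proceed by induction on $\|X(e)\| + \|Y(e)\|$ modelled on the proof of Proposition~\ref{MSF'}: decomposing $Y(e) = y_1 + \cdots + y_k$ into single elements $y_j \in F^\times$ (each of which is not in $N_F$), one replaces the cross-term contribution $y_j X(e)\overline{C(e)}$ one summand at a time by the fusion axiom (F), reducing in finitely many steps to the base $\|Y(e)\| = 1$ where $Y(e) \in F^\times \setminus N_F$ and the first setup applies.
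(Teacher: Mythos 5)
Your core mechanism is exactly the one the paper uses for the high-norm case: the weak ideal lemma (the paper gets it as Lemma~\ref{lem:ideal} via (MSF) $\Rightarrow$ (F) $\Rightarrow$ (I)), multiplication of the two orthogonality relations by $Y(e)$ and $X(e)$, and elimination of the cross term by Proposition~\ref{MSF'} with $\beta_0 = X(e)$, $\gamma_0 = Y(e)\overline{C(e)}$, $\delta_0 = -\beta$ --- this is literally the paper's choice of $\alpha,\beta,\gamma,\delta$ in its Case 1, and your pivot from raw (MSF) to Proposition~\ref{MSF'} correctly avoids the trap that $X(e)Y(e)$ can be null even when $X(e)$ and $Y(e)$ are not. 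However, there are two genuine gaps. First, the case $\|C\cdot(X\wedge_e Y)\| \leq 3$ is not ``a finite list of degenerate patterns treated directly by (T1)--(T3) and (F)'': it is roughly half of the paper's proof, and it needs input you never invoke, namely the interplay between the norm bound and the supports. Concretely, when $C(e),X(e),Y(e)$ are all nonzero, the bound forces either $\underline{X}\cap\underline{C}$ or $\underline{Y}\cap\underline{C}$ to equal $\{e\}$ --- in which case orthogonality with $C$ makes $X(e)$ (or $Y(e)$) null and the ideal lemma finishes --- or to equal $\{e,f\}$ with the relevant entries in $F^\times$, in which case $X\perp C$ gives the two-term cancellation $X(e)C(e) = -X(f)C(f)$ and one must verify the identity $C\cdot(X\wedge_e Y) = X(e)\,(C\cdot Y) \in N_F$. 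Since (MSF) and Proposition~\ref{MSF'} are unavailable below norm $4$, this support analysis is the only route, and your proposal does not contain it.

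Second, the induction you sketch for the sub-case $X(e), Y(e) \in N_F$ would fail as described: writing $Y(e) = y_1+\cdots+y_k$ and replacing $Y(e)$ by a single summand $y_j$ destroys the hypothesis that $Y$ lies in $\mathcal{V}^*(M)$, so there is no relation $y_j\overline{C(e)} + \beta \in N_F$ for ``the first setup'' to consume; the fusion steps you describe have no null inputs to fuse. Fortunately this sub-case needs no induction at all and is immediate from tools you already have: since $X(e)$ and $Y(e)$ are null, every coordinate $Y(e)X(f) - X(e)Y(f)$ of $X\wedge_e Y$ lies in $N_F$ by your weak ideal lemma, and summing against $C$ stays in $N_F$ by (I) (which follows from (F) with $z=0$, via Lemma~\ref{MSF_implies_F}); this is precisely how the paper disposes of that configuration. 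With that repair and with the low-norm support analysis actually carried out, your argument coincides with the paper's proof.
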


\begin{proof}
It suffices to show that for any $C\in C(M)$, $C\cdot (X\wedge_e Y) \in N_F$. 
Note for later reference that 
\[
C\cdot (X\wedge_e Y) = \sum_{f \in \underline{C}\setminus e}\left( C(f)X(e)Y(f)-C(f)Y(e)X(f) \right).
\]

We will consider the following two cases.

\medskip

{\bf Case 1:} $||C\cdot (X\wedge_e Y)||\geq 4$.

\medskip

We have the following subcases:

\begin{enumerate}
\item[•] If $C(e)=0$, then $C\cdot (X\wedge_e Y) = Y(e) \left( C \cdot X \right) - X(e) \left( C \cdot Y \right) \in N_F$ by the idyll axiom (I).\par
\item[•] If $X(e),Y(e)\in N_F$, then $X\wedge_e Y(f) \in N_F$ for all $f \in E$ and thus $C\cdot (X\wedge_e Y) \in N_F$ by (I). \par
\item[•] Suppose that either $X(e) \notin N_F$ or $Y(e)\notin N_F$. By symmetry, we can assume $Y(e) \notin N_F$.
Since $X\perp C$, we have $C(e)X(e)+ \underset{f \in E\setminus \{e\}}{\Sigma} C(f)X(f) \in N_F$ and thus 
\begin{equation} \label{eq:1}
C(e)X(e)Y(e)+\underset{f \in E\setminus \{e\}}{\Sigma} C(f)X(f)Y(e) \in N_F. 
\end{equation}

And since $Y\perp C$,
\begin{equation} \label{eq:2}
C(e)Y(e)+\underset{f \in E\setminus \{e\}}{\Sigma} C(f)Y(f)\in N_F.
\end{equation}

Since $C(e)Y(e)\notin N_F$, we can apply Proposition~\ref{MSF'} to \eqref{eq:1} and \eqref{eq:2}
with $\alpha = \underset{f \in E\setminus \{e\}}{\Sigma} C(f)X(f)Y(e)$, $\beta = X(e)$, $\gamma = C(e)Y(e)$, and
$\delta = -\underset{f \in E\setminus \{e\}}{\Sigma} C(f)Y(f)$.
As a result, we get $C \perp (X\wedge_e Y)$ .
\end{enumerate}

\medskip

{\bf Case 2:} $||C\cdot (X\wedge_e Y)|| < 4$.

\medskip

It is easy to check that if any one of $\{C(e), X(e), Y(e)\}$ is 0, then $(X\wedge_e Y)\perp C$. Assuming none of them is 0, we have the following subcases:

\begin{enumerate}
\item[•] If $||X(e)||\geq 2$ and $||Y(e)||\geq 2$, then either $\underline{X} \cap \underline{C} = \{e\}$ or $\underline{Y} \cap \underline{C}=\{e\}$, since otherwise $||C\cdot (X\wedge_e Y)||\geq  4$. Without loss of generality, assume $\underline{X} \cap \underline{C} = \{e\}$. Then, since $X\perp C$,  $X(e)\in N_F$ and therefore
\[
C\cdot (X\wedge_e Y) = \sum_{f \in E\setminus \{ e \}} C(f)X(e)Y(f) \in N_F.
\]
\item[•] If $||X(e)||=1$ or $||Y(e)||=1$, then either $|\underline{X} \cap \underline{C}|=\{e,f\}$ with $||X(f)||=1$ or $|\underline{Y} \cap \underline{C}|=\{e,f\}$ with $||Y(f)||=1$, since otherwise $||C\cdot (X\wedge_e Y)||\geq  4$.\par


By symmetry, we may assume without loss of generality that $|\underline{X} \cap \underline{C}|=\{e,f\}$ and $||X(f)||=1$.
Since $X\perp C$, $X(e)C(e)+X(f)C(f) \in N_F$. And since both $X(e)C(e)$ and $X(f)C(f)$ belong to $F^\times$, we must have $X(e)C(e)=-X(f)C(f)$. Therefore
\begin{align}
    C\cdot (X\wedge_e Y) &= \sum_{g\in \underline{C}\setminus e}\left( C(g)X(e)Y(g)-C(g)Y(e)X(g) \right) \\        
          &= \left(\sum_{g\in \underline{C}\setminus e} C(g)X(e)Y(g)\right) -C(f)Y(e)X(f)     \\
          &= \left(\sum_{g\in \underline{C}\setminus e}C(g)X(e)Y(g)\right) +C(e)Y(e)X(e)     \\
          &= \sum_{g\in \underline{C}} C(g)X(e)Y(g)      \\
          &= X(e)\left( C \cdot Y \right) \in N_F.      
\end{align}
\end{enumerate}
\end{proof}

The following result and its proof were inspired by \cite[Lemma 2.6]{Dress-Wenzel92}.

\begin{prop} \label{sum'}
Suppose $F$ is a tract satisfying (MSF). Let $X_1,...,X_n\in \NN[F^\times]$ be such that $X_I := \sum_{i\in I} X_i\in N_F$ 
for every $I\subset \{1,2,...,n\}$ with $n-2\leq|I|\leq n-1$. Then $\sum_{i=1}^n X_i \in N_F$.
\end{prop}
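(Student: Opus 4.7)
I would proceed by induction on $n$. In the base cases $n \in \{2,3,4\}$, one can partition $\{1,\ldots,n\} = I \sqcup J$ with both $|I|, |J| \in \{n-2, n-1\}$ (for instance $I = \{1,2\}$, $J = \{3,4\}$ when $n=4$); then $X_I, X_J \in N_F$ by hypothesis, and the idyll axiom~(I)---which holds in $F$ because Lemma~\ref{MSF_implies_F} provides~(F) and (F) with $z = 0$ is precisely (I)---yields $X = X_I + X_J \in N_F$ via Lemma~\ref{lem:ideal}.

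For the inductive step $n \geq 5$, the easy subcase is when some $X_i \in N_F$: then $X = X_i + X_{\{1,\ldots,n\}\setminus\{i\}}$ is a sum of two elements of $N_F$, and (I) concludes. Assuming instead that $X_i \notin N_F$ for every $i$, I would invoke (MSF) with $\alpha = X_{\{1,\ldots,n\}\setminus\{k,l\}}$, $\beta = X_{\{1,\ldots,n\}\setminus\{m\}}$, and $\gamma = X_l$ (for three distinct indices $k, l, m$) to derive the auxiliary relation
\[
X + X_{\{1,\ldots,n\}\setminus\{k,l,m\}} \in N_F.
\]
Indeed, $\alpha + \gamma = X_{\{1,\ldots,n\}\setminus\{k\}} \in N_F$ by hypothesis, while $\beta - \gamma$ equals $X_{\{1,\ldots,n\}\setminus\{m,l\}} + (X_l + (-X_l))$ and therefore lies in $N_F$ by combining $X_{\{1,\ldots,n\}\setminus\{m,l\}} \in N_F$ with the null elements $a + (-a)$ coming from (T2) via iterated application of (I); the hypothesis $\gamma \notin N_F$ is just our standing assumption that $X_l \notin N_F$, and the norm condition is easily met.

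To pass from $X + X_{\{1,\ldots,n\}\setminus\{k,l,m\}} \in N_F$ to $X \in N_F$, I would peel off the extra term by a second application of (MSF) with $\gamma$ taken to be $X_{\{1,\ldots,n\}\setminus\{k,l,m\}}$. If this partial sum happens to lie in $N_F$ (the favorable case, which in fact already settles the argument when $n = 5$ since $|\{k,l,m\}| = n-2$ forces $X_{\{k,l,m\}} \in N_F$ as well), then (I) closes the decomposition $X = X_{\{k,l,m\}} + X_{\{1,\ldots,n\}\setminus\{k,l,m\}}$; otherwise, the inductive hypothesis applied to the $(n-3)$-subsum reduces $n$ and iterating the construction concludes. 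The main obstacle I anticipate is precisely this peeling-off: the auxiliary relation produces $X + (\text{extra})$ rather than $X$ itself, and closing the loop requires a careful case analysis on whether the intermediate partial sums lie in $N_F$, together with iterated (MSF) applications whose norm condition $\|\alpha+\beta\| \geq 4$ must be verified (though this is routine in the large-$n$ regime).
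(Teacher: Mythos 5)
Your base cases and your first application of (MSF) are correct: with $\alpha = X_{\{1,\ldots,n\}\setminus\{k,l\}}$, $\beta = X_{\{1,\ldots,n\}\setminus\{m\}}$ and $\gamma = X_l \notin N_F$, one does obtain $X + X_{\{1,\ldots,n\}\setminus\{k,l,m\}} \in N_F$, and the verification of $\beta-\gamma\in N_F$ via $X_l - X_l \in N_F$ is fine. (A small slip: closing sums of null elements uses the idyll axiom (I) directly, not Lemma~\ref{lem:ideal}, which is about products.) The genuine gap is the peeling-off step, and it is not routine bookkeeping --- it is the entire difficulty of the proposition. Note that (MSF) with $\gamma \notin N_F$ can never yield a conclusion of the form $\alpha + \beta$ with $\beta = 0$: that would require $\beta - \gamma = -\gamma \in N_F$, hence $\gamma \in N_F$. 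So when $Y := X_{\{1,\ldots,n\}\setminus\{k,l,m\}} \notin N_F$, any application of (MSF) to $X + Y \in N_F$ with $\gamma = Y$ must take some nonzero $\beta$ with $\beta - Y \in N_F$ (e.g.\ $\beta = -X_k$, giving $X - X_k \in N_F$), and you are left with a new non-null residue; iterating only trades one residue for another and never terminates at $X$ itself. Your other escape route, ``the inductive hypothesis applied to the $(n-3)$-subsum,'' is not licensed: applying the proposition to the $n-3$ elements indexed by $\{1,\ldots,n\}\setminus\{k,l,m\}$ would require all of \emph{their} subsums of sizes $n-4$ and $n-5$ to be null, which your hypotheses do not provide. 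Even your favorable subcase needs $X_{\{k,l,m\}} \in N_F$ in addition to $Y \in N_F$, and that is only guaranteed by hypothesis when $n = 5$.

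The paper's proof avoids this trap by never attempting to cancel a residue. After reducing to the case where all $X_i$ and all $X_i + X_j$ are non-null, it chooses a \emph{maximal} nonempty proper subset $J$ with $X_J \notin N_F$; maximality forces $|J| \leq n-3$ and makes every proper superset of $J$ have null sum, which supplies exactly the auxiliary relations needed. It then chains four applications of (MSF) (with $\gamma$ successively $X_J$, $X_1$, $X_J$, and $X_1+X_2$) through intermediate elements of $N_F$ carrying formal pairs such as $X_1 - X_1$, arranged so that the final $\alpha+\beta$ is precisely $X_1 + X_2 + X_I + X_J = \sum_{i=1}^n X_i$. Some device of this kind --- a globally engineered chain rather than local peeling --- is needed to complete your argument; as written, the proposal stops exactly where the real work begins.
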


\begin{proof}
We may assume, without loss of generality, that
$X_i\notin N_F$ for all $i$ and $X_i+X_j \notin N_F$ for all $i \neq j$, since otherwise 
\[
X_i+\sum_{\substack{j=1 \\ j \neq i}}^n X_j \in N_F
\]
or 
\[
X_i+X_j+\sum_{\substack{k=1 \\ k \neq i,j}}^n X_k \in N_F.
\]

Let $J$ be a maximum non-empty proper subset of $\{ 1,\ldots, n \}$ such that $X_J := \underset{j\in J}{\sum} X_j\notin N_F$. Since $J$ is proper, $|J|\leq n-3.$ By symmetry, we may assume without loss of generality that $J\subseteq\{4,5,...,n\}$ and that $3 \in I := J^c \backslash \{ 1,2 \}.$

\medskip

From the maximality of $J$, we have
\[
X_1+X_I+X_J\in N_F, \; X_1+X_2+X_J\in N_F
\]
and since $X_J\notin N_F$ by assumption, (MSF) implies\footnote{Recall that $X_1 - X_1$ is not the same thing as zero in $\NN[F^\times]$!} 
\begin{equation} \label{eq:nullcombo1}
X_1+X_I - X_1 - X_2 \in N_F .
\end{equation}

Since $X_1 + X_J \in N_F$ and $X_1 \not\in N_F$, (MSF) applied to \eqref{eq:nullcombo1} yields
\begin{equation} \label{eq:nullcombo2}
X_1 -X_I+X_2+X_J \in N_F .
\end{equation}

Similarly, since $X_1+X_2+X_J \in N_F$ and $X_J\notin N_F$, (MSF) applied to \eqref{eq:nullcombo2} yields
\begin{equation} \label{eq:nullcombo3}
X_1 - X_I + X_2 - X_1 - X_2 \in N_F.
\end{equation}

Finally, since $X_1+X_2+X_J\in N_F$ and $X_1+X_2 \notin N_F$, (MSF) applied to \eqref{eq:nullcombo3} yields
\[
\sum_{i=1}^n X_i= X_1+X_2+X_I+X_J \in N_F.
\]

\end{proof}

Our next goal is to prove that for any generalized vector $X$ and any generalized covector $Y$ such that $X\cdot Y$ has at most three terms, we have $X\perp Y$. 
We first recall the following key lemma from \cite{Baker-Bowler19}: 
\begin{lemma} \label{supp}
Let $X$ be a generalized vector of $M$ and choose $e\in E$ with $X(e)\notin N_F$. Then there is
some circuit $C$ with $e\in \underline{C}\subset \underline{X}$.
\end{lemma}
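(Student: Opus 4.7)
The plan is to argue by contradiction, combining the orthogonality definition of generalized vectors with a standard matroid\hyph theoretic fact about coloops. Suppose no circuit $C$ of $\underline{M}$ satisfies $e \in \underline{C} \subseteq \underline{X}$; I would show this forces $X(e) \in N_F$, contradicting the hypothesis.

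The assumption says precisely that $e$ is a coloop of the restriction $\underline{M}|_{\underline{X}}$, equivalently that $\{e\}$ is a cocircuit of $\underline{M}|_{\underline{X}}$. Using the classical duality $(\underline{M}|_{\underline{X}})^* = \underline{M}^*/\underline{X}^c$ together with the description of circuits of a contraction as minimal nonempty sets of the form $C \cap \underline{X}$ for $C$ a circuit of $\underline{M}^*$, one obtains a cocircuit $D_0$ of $\underline{M}$ with $\underline{D_0}\cap \underline{X} = \{e\}$. By the $F$\hyph signature axiom (C2), this classical cocircuit lifts to an $F$\hyph cocircuit $D \in C^*(M)$ with the same support, so $D(e) \in F^\times$ and $D(f)=0$ for every $f \in \underline{X}\setminus\{e\}$.

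The key step is then the collapse of the inner product $X \cdot D = \sum_{f \in E} X(f)\,\overline{D(f)}$: for $f \notin \underline{X}$ the factor $X(f)$ is zero, while for $f \in \underline{X}\setminus\{e\}$ the factor $\overline{D(f)}$ is zero. Hence
\[
X \cdot D \;=\; X(e)\,\overline{D(e)}.
\]
Because $X \in \mathcal{V}(M)$ and $D \in C^*(M)$, the left\hyph hand side lies in $N_F$. Since $\overline{D(e)} \in F^\times$, axiom (T3) (closure of $N_F$ under the $F^\times$\hyph action) applied with the unit $\overline{D(e)}^{-1}$ yields $X(e) \in N_F$, contradicting the hypothesis $X(e) \notin N_F$.

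The only external ingredient is the classical matroid fact that a coloop of a restriction is detected by a cocircuit of the ambient matroid with the prescribed intersection; everything else is purely formal, relying on the fact that the involution preserves $F^\times$ together with axiom (T3). I therefore do not anticipate any genuine obstacle in this proof.
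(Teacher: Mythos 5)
Your proof is correct and is essentially the standard argument the paper defers to (the proof of \cite[Lemma 3.43]{Baker-Bowler19}, adapted to generalized vectors): if no circuit through $e$ lies in $\underline{X}$, produce a cocircuit of $\underline{M}$ meeting $\underline{X}$ only in $e$, lift it via (C2) to an $F$-cocircuit $D$, and then orthogonality collapses to $X(e)\,\overline{D(e)}\in N_F$, whence $X(e)\in N_F$ by (T3), a contradiction. The only point left implicit is the trivial observation that $X(e)\notin N_F$ forces $X(e)\neq 0$ (since $0\in N_F$ by (T1)), so that $e\in\underline{X}$ and the restriction argument applies.
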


Although \cite[Lemma 3.43]{Baker-Bowler19} is stated in the language of fuzzy rings, the same (straightforward) proof works for generalized vectors in our sense. 

\begin{prop} \label{lower-term}
Let $F$ be a tract and let $M$ be an $F$-matroid. If $X\in \mathcal{V}(M)$ and $Y\in \mathcal V^*(M)$ satisfy $||X\cdot Y||\leq 3$, then $X\perp Y$.
\end{prop}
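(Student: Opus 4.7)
The plan is to reduce to the case of ordinary (non-generalized) $F$-vectors and $F$-covectors using the series-parallel construction from the proof of Theorem~\ref{thm:perfectequalsstronglyperfect}, then reduce further via deletion and contraction to a matroid on at most 3 elements, and finally analyze that small case using the dual pair axiom (DP3) and the tract axioms.

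First I would apply the series-parallel construction with parameters $\ell(e) = \max(1,\|X(e)\|)$ and $k(e) = \max(1,\|Y(e)\|)$, producing an $F$-matroid $M'$ on an enlarged ground set $E'$ together with ordinary $X' \in V(M')$ and $Y' \in V^*(M')$ satisfying $X' \cdot Y' = X \cdot Y$ and $|\underline{X'} \cap \underline{Y'}| = \|X \cdot Y\| \leq 3$. This lets me assume from here on that $X \in V(M)$ and $Y \in V^*(M)$ are ordinary, with $|\underline{X} \cap \underline{Y}| \leq 3$. Next I would use Propositions~\ref{minor1} and~\ref{minor2} to contract $\underline{X} \setminus \underline{Y}$ (which preserves $X$ as a vector and preserves $Y$ as a covector since $Y$ vanishes there) and delete $E \setminus \underline{X}$ (by symmetric reasoning), yielding a minor $M''$ on ground set $\underline{X} \cap \underline{Y}$ with $X|_{M''} \in V(M'')$, $Y|_{M''} \in V^*(M'')$, and the same inner product.

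Finally, I would analyze the matroid $M''$ on $n := |\underline{X} \cap \underline{Y}| \leq 3$ elements. The case $n = 0$ is immediate. If $M''$ contains a loop or coloop in its ground set, then the corresponding singleton $F$-circuit or $F$-cocircuit forces one of $X|_{M''}$, $Y|_{M''}$ to vanish at that element, contradicting our support assumption; this rules out $n = 1$ entirely as well as several subcases for $n = 2, 3$. The remaining possibilities are $M'' \in \{U_{1,2}, U_{1,3}, U_{2,3}\}$. In each of these, either the $F$-circuit signature or the $F$-cocircuit signature consists of a single orbit; using the 2-term orthogonality relations from (DP3) together with axiom (T2), I would deduce that $Y|_{M''}$ is an $F^\times$-multiple $\mu D$ of the unique $F$-cocircuit (when the cocircuit signature is a single orbit, namely for $U_{1,2}$ and $U_{1,3}$) or dually that $X|_{M''}$ is an $F^\times$-multiple $\lambda C$ of the unique $F$-circuit (for $U_{1,2}$ and $U_{2,3}$). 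In all cases, $X|_{M''} \cdot Y|_{M''}$ thereby becomes an $F^\times$-multiple of $C \cdot D$, which lies in $N_F$ by (DP3) and (T3). The main obstacle will be verifying the forcing step in the 3-element cases, where one must iterate the 2-term orthogonality deductions across the different circuits (or cocircuits) of $M''$ and consistently propagate the scalar, ultimately relying only on (T1)--(T3) and (DP3).
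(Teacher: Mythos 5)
Your argument is correct, but it follows a genuinely different route from the paper's. The paper works directly with the generalized vectors, splitting into cases by $|\underline X\cap\underline Y|\in\{0,1,2,3\}$: the one- and two-element cases are handled via Lemma~\ref{supp} (a circuit through any non-null coordinate, contained in the support) together with the idyll property (Lemma~\ref{lem:ideal}), and only the three-element case passes to a minor, where the identification of $X$ with a scalar multiple of the $F$-circuit of $U_{2,3}$ is obtained by citing \cite[Lemma 4.19]{Baker-Bowler19}. You instead linearize first with the series--parallel trick (which the paper itself advertises as an alternative route to Propositions~\ref{minor1} and~\ref{minor2}), then uniformly contract $\underline X\setminus\underline Y$ and delete $E\setminus\underline X$, and finish all cases by hand on a loopless, coloopless matroid with at most three elements. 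What this buys: you avoid Lemma~\ref{supp}, the external citation, and the idyll property, replacing them with elementary two-term forcing ($a+b\in N_F$ with $a,b\in F^\times$ forces $b=-a$ by (T2)--(T3)); the cost is reliance on the only-sketched verification that the series--parallel construction produces honest $X'\in V(M')$, $Y'\in V^*(M')$ with $X'\cdot Y'=X\cdot Y$. Your ``main obstacle'' is not one: for $U_{2,3}$ the cocircuits $D_f,D_g$ give $X''(e)/C(e)=X''(f)/C(f)=X''(g)/C(g)$, and dually for $U_{1,3}$. Do record, as the paper does implicitly, that elements of $F^\times$ are never null, since both your loop/coloop elimination and the two-term forcing need this.
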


\begin{proof}
We will treat each of the four possibilities for $|\underline X\cap \underline Y|$ separately.

\medskip

{\bf Case 0:} $|\underline X\cap \underline Y|=0$.\par

In this case, $X \cdot Y = 0 \in N_F$.

\medskip

{\bf Case 1:} $|\underline X\cap \underline Y|=1$.\par

Writing $\underline X\cap \underline Y=\{e \}$, we claim that either $X(e)$ or $Y(e)$ is null. Indeed, suppose that neither $X(e)$ nor $Y(e)$ belongs to $N_F$. Then by Lemma \ref{supp}, there is a circuit $C$ and cocircuit $D$ such that $e\in \underline{C}\subset \underline{X}$ and $e\in \underline{D}\subset \underline{Y}$. But this implies $\underline{C}\cap \underline{D}=\{e\}$, which is impossible.

Note that a tract that satisfies (F) is an idyll. We therefore have $X \cdot Y = X(e) Y(e) \in N_F$.

\medskip

{\bf Case 2:} $|\underline X\cap \underline Y|=2$. \par

Writing $\underline X\cap \underline Y=\{e,f\}$, we observe that, since $X\cdot Y$ has at most 3 terms, at least three of $X(e),X(f),Y(e),Y(f)$ must lie in $F^\times$ (and not just $\NN[F^\times]$). Without loss of generality, we may suppose that $X(e), X(f), Y(e)\in F^\times$. In particular, these values are non-null. By Lemma \ref{supp}, there exist a a circuit $C$ such that $e\in \underline{C} \subset \underline{X}$ and a cocircuit $D$ such that $e\in \underline{D} \subset \underline{Y}$. Thus $e \in \underline{C}\cap \underline{D} \subset \underline{X} \cap \underline{Y} = \{ e,f \}$ and $|\underline{C}\cap \underline{D}| \neq 1$, from which it follows that $\underline{C}\cap \underline{D} = \{ e,f \}$. 
We therefore have the following relations: 
\[
C(e)D(e)+C(f)D(f)\in N_F \Rightarrow C(f)=-\frac{ \ C(e)D(e)}{D(f)}.
\]
\[
X(e)D(e)+X(f)D(f)\in N_F \Rightarrow X(f)=-\frac{ \ X(e)D(e)}{D(f)}.
\]

Since
\[
X \cdot Y = X(e)Y(e) + X(f)Y(f) = X(e) \left( Y(e) - \frac{D(e)}{D(f)} Y(f) \right)
\]
and $C \perp Y$ implies that
\[
C \cdot Y = C(e)Y(e) + C(f)Y(f) = C(e) \left( Y(e) - \frac{D(e)}{D(f)} Y(f) \right) \in N_F,
\]
we have $Y(e) - \frac{D(e)}{D(f)} Y(f) \in N_F$ as well and thus $X \cdot Y \in N_F$ by Lemma~\ref{lem:ideal}.

\medskip

{\bf Case 3:} $|\underline X\cap \underline Y|=3$.\par

Suppose $\underline X\cap \underline Y=\{e,f,g\}$. Let $I = E - \underline{X}, J = \underline{X}\setminus\underline{Y}$, and let $M' = M \setminus I / J$ be the corresponding minor on $E' = \{ e,f,g \}$.
By Propositions~\ref{minor1} and \ref{minor2}, the natural restrictions $X'=X|_{E'}$ and $Y'=Y|_{E'}$ are vectors and covectors of $M'$, respectively. 
Since $\underline{X}' = \underline{Y}' = E'$, the matroid $\underline{M}'$ has no loops or coloops (for example, if $e$ is a loop then $C' = \{ e \}$ is a circuit with $C'\cdot Y=Y(e)C(e) \in F^\times$, which is impossible, and similarly for coloops). By \cite[Table 1.1]{Oxley92}, the only matroids on 3 elements with no loops or coloops are the uniform matroids $U_{1,3}$ and $U_{2,3}$. By duality, we may assume without loss of generality that $\underline{M}' = U_{2,3}$. In this case, $M'$ has exactly one $F$-circuit $C$, with $\underline{C}=\{e,f,g\}$, and three $F$-cocircuits $D_e, \ D_f$, and $D_g$ with $\underline {D_e}=\{f,g\}, \ \underline {D_f}=\{e,g\}$ and $\underline {D_g}=\{e,f\}$.\par

The fact that $\underline{X}' = \underline{Y}' = E'$, combined with $\| X \cdot Y \| = 3$, implies that $X'$ is a vector (not a generalized vector) of $M'$ and $Y'$ is a covector.
By \cite[Lemma 4.19]{Baker-Bowler19}, the $F$-circuits of $M'$ are exactly the vectors of $M'$ having minimal non-empty support; together with Axiom (C2) in \cite[Definition 3.11]{Baker-Bowler19}, this implies in our situation that $X$ is a scalar multiple of $C$. It follows that $X \perp Y$.
\end{proof}

We have finally put together all the ingredients needed to prove the main theorem. The structure of the following proof is modeled on \cite[Proof of Theorem 2.7]{Dress-Wenzel92}.

\begin{proof}[Proof of Theorem \ref{thm:strongmainthm}]
Assume there exists a non-perfect $F$-matroid $M$ on some set $E$, and choose $|E|$ as small as possible. Then by definition, there exist $X\in \mathcal{V}(M)$ and $Y\in \mathcal{V}^*(M)$ such that $X \cdot Y \notin N_F$. By Prop \ref{lower-term}, $\| X\cdot Y \| \geq 4$.  \par
\begin{claim}
$\underset{e\in E \setminus \{ e_0 \}}\sum Y(e)C(e)\in N_F$ for all $e_0 \in E$ and all $C\in C(M)$. 
\end{claim}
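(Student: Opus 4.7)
The plan is to prove the claim by combining perfection of the minor $M\setminus e_0$ (which holds by minimality of $M$) with the wedge product construction and a contrapositive application of (MSF). The trivial case $C(e_0)=0$ is immediate, since then $\sum_{e\neq e_0} Y(e)C(e) = Y\cdot C \in N_F$ (as $Y\in\mathcal{V}^*(M)$ and $C\in C(M)$).

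For the case $C(e_0)\in F^\times$, I would first perform two preliminary reductions. By Theorem~\ref{thm:perfectequalsstronglyperfect}, we may replace $X,Y$ by non-generalized elements in $V(M)$ and $V^*(M)$, so that $X(e),Y(e)\in F$ for all $e$. Moreover, if $X(e)=0$ for some $e\in E$, then Propositions~\ref{minor1} and~\ref{minor2} make $X|_{E\setminus\{e\}}$ and $Y|_{E\setminus\{e\}}$ a vector-covector pair in the perfect minor $M\setminus e$, yielding the contradiction $X\cdot Y\in N_F$; the same argument handles $Y(e)=0$. So we may assume $\underline X = \underline Y = E$, and in particular $X(e_0), Y(e_0)\in F^\times$. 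I would then form $X\wedge_{e_0} C \in \mathcal{V}(M)$, invoking the vector analog of Proposition~\ref{wedge} (which follows by applying that proposition to the dual matroid $M^*$, since $\mathcal{V}(M)=\mathcal{V}^*(M^*)$). Since $(X\wedge_{e_0} C)(e_0)=0$, Propositions~\ref{minor1} and~\ref{minor2} place $(X\wedge_{e_0} C)|_{E\setminus\{e_0\}}$ and $Y|_{E\setminus\{e_0\}}$ as a vector-covector pair in the perfect minor $M\setminus e_0$, yielding
\[
C(e_0) A' - X(e_0) A \in N_F,\quad A := \sum_{f\neq e_0} Y(f) C(f),\quad A' := \sum_{f\neq e_0} Y(f) X(f).
\]

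Finally, multiplying $Y\cdot C\in N_F$ by $X(e_0)$ via the idyll property gives $X(e_0) C(e_0) Y(e_0)+X(e_0) A\in N_F$. Setting $\alpha = C(e_0)A'$, $\beta = X(e_0)C(e_0)Y(e_0)$, $\gamma = -X(e_0)A$, the (MSF)-hypotheses $\alpha+\gamma, \beta-\gamma\in N_F$ are exactly the two displayed relations, and $\|\alpha+\beta\| = \|C(e_0)(X\cdot Y)\| = \|X\cdot Y\|\geq 4$. If (MSF) applied, we would obtain $\alpha+\beta = C(e_0)(X\cdot Y)\in N_F$, hence $X\cdot Y\in N_F$ (since $C(e_0)\in F^\times$), contradicting our standing assumption. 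Therefore the only remaining (MSF)-hypothesis, $\gamma=0$ or $\gamma\notin N_F$, must fail, so $\gamma = -X(e_0)A \in N_F$; since $X(e_0)\in F^\times$, we conclude $A\in N_F$. The main obstacle in carrying out this plan is making precise the vector analog of Proposition~\ref{wedge}, which should follow cleanly from the duality $\mathcal{V}(M)=\mathcal{V}^*(M^*)$ but requires explicit verification in the tract setting.
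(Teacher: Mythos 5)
Your skeleton is the same as the paper's: wedge $X$ with the circuit $C$ at $e_0$, restrict to $M\setminus e_0$ using Propositions~\ref{minor1} and~\ref{minor2} (together with the dual form of Proposition~\ref{wedge}, exactly as the paper implicitly does), invoke perfection of that minor via minimality of $|E|$, and combine the resulting relation with $Y\cdot C\in N_F$ through a strong-fusion argument. The genuine gap is your first reduction. You cannot ``replace $X,Y$ by non-generalized elements of $V(M)$ and $V^*(M)$'' by citing Theorem~\ref{thm:perfectequalsstronglyperfect}: that theorem presupposes that $F$ is perfect, which is precisely what is being proved, and its proof replaces $M$ by a series--parallel extension on a strictly larger ground set. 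Inside the minimal-counterexample argument this is fatal: the Claim is about the fixed matroid $M$ and its circuits, and every later step of yours (the full-support reduction, perfection of $M\setminus e_0$) uses minimality of $|E|$, which the enlarged matroid does not enjoy. Nor can one rerun the minimality over genuine vector/covector pairs only, because your argument needs orthogonality in the minor of the generalized vector $(X\wedge_{e_0}C)|_{E\setminus\{e_0\}}$, i.e.\ strong perfection of the minor, not mere perfection.

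Without that reduction the last step collapses: from $\gamma=-X(e_0)A\in N_F$ you can cancel $X(e_0)$ only when $X(e_0)\in F^\times$. If $\|X(e_0)\|\geq 2$ --- and in particular if $X(e_0)\in N_F$, in which case $X(e_0)A\in N_F$ holds automatically by Lemma~\ref{lem:ideal} --- you learn nothing about $A$. This is exactly the difficulty Proposition~\ref{MSF'} was introduced to handle: the paper applies it with $\beta=X(e_0)\in\NN[F^\times]$ of arbitrary norm and $\gamma=-\sum_{e\neq e_0}Y(e)C(e)$ assumed non-null, concluding $C(e_0)(X\cdot Y)\in N_F$ and hence a contradiction. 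Your argument is repaired by the same substitution: run your contrapositive with Proposition~\ref{MSF'} instead of (MSF), taking $\alpha=C(e_0)A'$, $\beta=X(e_0)$, $\gamma=-A$, $\delta=Y(e_0)C(e_0)$; then the hypotheses are your two displayed relations together with $\|C(e_0)(X\cdot Y)\|\geq 4$, the conclusion $C(e_0)(X\cdot Y)\in N_F$ is impossible, so $-A\in N_F$ and hence $A\in N_F$, with no invertibility of $X(e_0)$ and no reduction to genuine vectors needed. Your treatment of the case $C(e_0)=0$ and the wedge/minor steps are correct as written.
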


Assume for the sake of contradiction that there exist $e_0\in E$ and $C\in C(M)$ such that $\underset{e\in E'}{\sum} Y(e)C(e)\notin N_F$, where $E' = E\setminus \{e_0\}$. 

Let $M' = M\setminus e_0$. By Propositions~\ref{minor1} and \ref{minor2},
\begin{center}
$Y|_{E'}\in \mathcal{\mathcal{V}^*}(M')$ \quad and \quad
$(X\wedge_{e_0}C)|_{E'}\in \mathcal{\mathcal{V}}(M')$.
\end{center}
By the minimality of $E$, $M'$ is perfect, and therefore
\[
\sum_{e\in E'} Y(e)(X\wedge_{e_0}C) \in N_F.
\]
Explicitly, this means that
\begin{equation} \label{eq:cancel1}
C(e_0)\sum_{e\in E'} Y(e)X(e)- X(e_0)\sum_{e\in E'} Y(e)C(e) \in N_F.
\end{equation}
We also have
\begin{equation} \label{eq:cancel2}
Y\cdot C = Y(e_0)C(e_0) + \sum_{e\in E'} Y(e)C(e)\in N_F.
\end{equation}

Since $\| X\cdot Y \| \geq 4$, we may apply Proposition~\ref{MSF'} with $\beta = X(e_0)$ and $\gamma = -\sum_{e\in E'} Y(e)C(e) \notin N_F$ to \eqref{eq:cancel1} and \eqref{eq:cancel2}
to obtain
\[
C(e_0) (X \cdot Y) = C(e_0)\sum_{e\in E'} Y(e)X(e)+Y(e_0)C(e_0)X(e_0)\in N_F, 
\]
which means that $X \perp Y$, a contradiction. \par

\begin{claim}
$X \perp Y$ (a contradiction which finishes the proof). 
\end{claim}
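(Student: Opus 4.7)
The plan is to apply Proposition~\ref{sum'} to the family $X_e := Y(e) X(e) \in \NN[F^\times]$ indexed by $e \in E$, where $n := |E|$. Since $\sum_{e \in E} X_e = X \cdot Y$ and $\|X \cdot Y\| \geq 4$, Proposition~\ref{sum'} will produce the desired $X \cdot Y \in N_F$ -- contradicting our standing assumption -- as soon as one verifies the hypothesis $\sum_{e \in I} X_e \in N_F$ for every $I \subset E$ of size $n-1$ or $n-2$.

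First I would observe that the proof of Claim~1 is symmetric under swapping $X \leftrightarrow Y$ and circuits $\leftrightarrow$ cocircuits: applying it to the wedge $Y \wedge_{e_0} D \in \mathcal{V}^*(M)$ (supplied by Proposition~\ref{wedge}), paired with $X|_{E \setminus e_0} \in \mathcal{V}(M/e_0)$ (Proposition~\ref{minor2}) and the perfection of $M/e_0$ (from minimality of $|E|$), the same Proposition~\ref{MSF'} manipulation yields the dual statement
\[
\sum_{e \neq e_0} X(e) D(e) \in N_F \qquad \text{for every } e_0 \in E \text{ and every } D \in C^*(M).
\]
Because every cocircuit of $M \setminus e_0$ is a restriction of some cocircuit of $M$, this dual claim realizes $X|_{E \setminus e_0}$ as an element of $\mathcal{V}(M \setminus e_0)$. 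Combined with $Y|_{E \setminus e_0} \in \mathcal{V}^*(M \setminus e_0)$ (Proposition~\ref{minor2}) and the perfection of $M \setminus e_0$, this produces the size-$(n-1)$ partial sums
\[
\sum_{e \neq e_0} X(e) Y(e) \;=\; X|_{E \setminus e_0} \cdot Y|_{E \setminus e_0} \;\in\; N_F.
\]

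For the size-$(n-2)$ partial sums one iterates the same wedge + Proposition~\ref{MSF'} derivation one level deeper inside the perfect minor $M \setminus e_0$, establishing $\sum_{e \neq e_0, e_1} X(e) D(e) \in N_F$ for every $D \in C^*(M)$; this places $X|_{E \setminus \{e_0, e_1\}}$ into $\mathcal{V}(M \setminus \{e_0, e_1\})$, and perfection of the doubly-deleted minor then supplies $\sum_{e \neq e_0, e_1} X(e) Y(e) \in N_F$. With both levels of partial sums in $N_F$ and $\|X \cdot Y\| \geq 4$, Proposition~\ref{sum'} forces $X \cdot Y \in N_F$, completing the contradiction.

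The main technical obstacle is orchestrating this iterated ``Claim $1'$ for pairs.'' Since Claim~1 was proved by contradiction using $X \cdot Y \notin N_F$, the identical argument cannot simply be replayed inside $M \setminus e_0$, where the corresponding inner product already lies in $N_F$ by perfection. Instead, one must re-execute the wedge and Proposition~\ref{MSF'} step with the cocircuit and the generalized vector restricted to the deeper minor, keeping track of the norm hypothesis $\|\alpha + \beta \delta\| \geq 4$ demanded by (MSF). The passage from (SF) to the weaker axiom (MSF) is precisely what makes this bookkeeping subtle, and it is where the hypothesis $\|X \cdot Y\| \geq 4$ does its essential work.
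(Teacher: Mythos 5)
Your overall strategy (feed the partial sums of $X_e := X(e)Y(e)$ into Proposition~\ref{sum'}) is the paper's, and your size-$(n-1)$ step is correct and matches the paper: the symmetric version of Claim~1, proved via $Y \wedge_{e_0} D$, Proposition~\ref{minor2}, perfection of $M/e_0$, and Proposition~\ref{MSF'}, does show $X|_{E\setminus e_0} \in \mathcal{V}(M\setminus e_0)$, whence $\sum_{e\neq e_0} X(e)Y(e) \in N_F$. The gap is in the size-$(n-2)$ step. You propose to ``iterate the same wedge + Proposition~\ref{MSF'} derivation one level deeper inside the perfect minor $M\setminus e_0$'' to get $\sum_{e\neq e_0,e_1} X(e)D(e)\in N_F$ for all $D\in C^*(M)$, but that derivation is a proof by contradiction whose engine is the assumption that the relevant inner product is \emph{not} null: at the top level, the non-null quantity $\gamma=-\sum_{e\in E'}Y(e)C(e)$ is fed into Proposition~\ref{MSF'} to conclude $C(e_0)(X\cdot Y)\in N_F$, contradicting $X\cdot Y\notin N_F$. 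Inside $M\setminus e_0$ the analogous conclusion would be $D(e_1)\bigl(X|_{E\setminus e_0}\cdot Y|_{E\setminus e_0}\bigr)\in N_F$, which is already true by perfection of the minor, so the dichotomy yields nothing and no contradiction is available. You name this obstacle yourself, but ``re-execute the wedge and MSF$'$ step in the deeper minor, keeping track of the norm hypothesis'' is not an argument; nothing in the toolkit (Propositions~\ref{minor1}, \ref{minor2}, \ref{wedge}, \ref{MSF'}) lets you delete a \emph{second} coordinate from the generalized vector $X|_{E\setminus e_0}$ and remain in $\mathcal{V}(M\setminus\{e_0,e_1\})$, since deletion of a generalized vector is only controlled when the deleted entry vanishes.

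The paper sidesteps this by never attempting a second wedge argument: it uses the \emph{mixed} minor $M\setminus e_1/e_2$ rather than your doubly-deleted one. From the dual claim, $X|_{E\setminus e_1}\in\mathcal{V}(M\setminus e_1)$, and the further restriction is a \emph{contraction}, which Proposition~\ref{minor2} performs for free: $X|_{E\setminus\{e_1,e_2\}}\in\mathcal{V}(M\setminus e_1/e_2)$. Dually, from Claim~1, $Y|_{E\setminus e_2}\in\mathcal{V}^*(M/e_2)$, and the further restriction is a \emph{deletion}, again free by Proposition~\ref{minor2}: $Y|_{E\setminus\{e_1,e_2\}}\in\mathcal{V}^*(M/e_2\setminus e_1)$. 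Both land in the same perfect minor $M\setminus e_1/e_2$, giving $\sum_{e\neq e_1,e_2}X(e)Y(e)\in N_F$, and Proposition~\ref{sum'} then finishes exactly as you intend (note that Proposition~\ref{sum'} needs no norm hypothesis; $\|X\cdot Y\|\geq 4$ was consumed already in Claim~1). If you replace your doubly-deleted minor with this mixed minor, your proof closes; as written, the justification that $X|_{E\setminus\{e_0,e_1\}}$ is a generalized vector of $M\setminus\{e_0,e_1\}$ is missing and your proposed route to it does not work.
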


From the previous claim, 
$\underset{e\in E \setminus \{ e_0 \}}\sum Y(e)C(e)\in N_F$ for all $e_0 \in E$ and all $C\in C(M)$. 
By symmetry, we also have $\underset{e\in E \setminus \{ e_0 \}}\sum X(e)D(e)\in N_F$ for all $e_0 \in E$ and $D\in C^*(M)$.
Thus $X|_{E\setminus\{e\}}$ is contained in both $\mathcal{V}(M\setminus e)$ and $\mathcal{V}(M/e)$ for all $e \in E$, and similarly $Y|_{E\setminus\{e\}}$ is contained in both $\mathcal{V^*}(M\setminus e)$ and $\mathcal{V^*}(M/ e)$ for all $e\in E$. 
Since both $M\setminus e$ and $M/ e$ are perfect, we have
\begin{equation} \label{eq:allbut1}
\sum_{e\in E\setminus\{e_0\}} X(e)Y(e) \in N_F \quad \forall  e_0\in E.  
\end{equation}
Moreover for every $e_1,e_2 \in E$ with $e_1\neq e_2$, we have 
\begin{center}
$X|_{E\setminus \{e_1\}} \in \mathcal V(M\setminus \{e_1\})$ \qquad and \qquad
$Y|_{E\setminus \{e_2\}} \in \mathcal V^*(M/ \{e_2\})$.
\end{center}
By Propositions \ref{minor1} and \ref{minor2}, 
\begin{center}
$X|_{E\setminus \{e_1,e_2\}} \in \mathcal V(M\setminus \{e_1\}/\{e_2\})$ \qquad and \qquad
$Y|_{E\setminus \{e_1,e_2\}} \in \mathcal V^*(M/ \{e_2\} \setminus \{e_1\})$.
\end{center}

Since $M'' = M \setminus e_1 / e_2 = M / e_2 \setminus e_1$ is perfect, we have
\begin{equation} \label{eq:allbut2}
\sum_{e\in E\setminus\{e_1,e_2\}} X(e)Y(e) \in N_F \quad \forall  e_1,e_2\in E. 
\end{equation}

Applying Proposition~\ref{sum'} to \eqref{eq:allbut1} and \eqref{eq:allbut2} shows that $X\perp Y$ as claimed.
\end{proof}

\section{Comparison with the work of Dress--Wenzel}
\label{sec:comparison}

In this section we briefly compare our results with those in \cite{Dress-Wenzel92}.

\medskip

For ease of exposition, we work with Lorscheid's ``simplified fuzzy rings''. It is proved in \cite[Appendix B]{Baker-Bowler19} that every fuzzy ring in the sense of Dress--Wenzel is weakly isomorphic to a simplified fuzzy ring, and it is proved in \cite[Theorem 2.21]{Baker-Lorscheid18} that the category of simplified fuzzy rings can be identified with a full subcategory of the category of tracts. In particular, every simplified fuzzy ring can be identified in a natural way with a tract.\footnote{For a discussion of which tracts come from simplified fuzzy rings, see \cite[Example 2.11]{Baker-Lorscheid18}.}

\medskip

A {\bf simplified fuzzy ring} in the sense of Lorscheid is a tuple $(K,+,\cdot,\epsilon,K_0)$ where $(K,+,\cdot)$ is a commutative semiring equal to $\NN[K^\times]$ and such that $\epsilon,K_0$ satisfy the following axioms:

\begin{enumerate}
    \item[(FR4)] $K_0$ is a proper semiring ideal, i.e., $K_0+K_0\subseteq K_0,$ $K \cdot K_0 \subseteq K_0$, $0\in K_0$ and $1 \notin K_0$.
    \item[(FR5)] For $\alpha \in K^*$ we have $1+\alpha \in K_0$ if and only if $\alpha = \epsilon$.
    \item[(FR6)] If $x_1,x_2,y_1,y_2 \in K$ and $x_1+y_1$, $x_2+y_2\in K_0$ then $x_1\cdot x_2 + \epsilon \cdot y_1 \cdot y_2 \in K_0$.
\end{enumerate}

To give a sufficient condition for perfection, Dress and Wenzel introduce the following variant of (FR6):
\begin{enumerate}
    \item [(FR6$''$)] If $\kappa, \lambda_1, \lambda_2 \in K,$ $ \mu \in K \setminus K_0$, and $\kappa + \mu \cdot \lambda_1, \ \mu + \lambda_2\in K_0$ then $\kappa + \epsilon \cdot \lambda_1 \cdot \lambda_2 \in K_0$.
\end{enumerate}

\begin{prop} \label{prop:FR6''impliesSF}
A simplified fuzzy ring satisfying (FR6$''$), when viewed as a tract, satisfies the strong fusion axiom (SF).
\end{prop}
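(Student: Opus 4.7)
The plan is to deduce (SF) from (FR6$''$) by a single direct application, after dispensing with the trivial case $\gamma = 0$. Recall that when a simplified fuzzy ring is regarded as a tract, we have $N_F = K_0$ and the involution $-1$ is played by $\epsilon$; moreover $\epsilon^2 = 1$, and $K_0$ is closed under addition and under multiplication by elements of $K$ by (FR4).

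If $\gamma = 0$, the hypotheses of (SF) reduce to $\alpha \in N_F$ and $\beta \in N_F$, and then $\alpha + \beta \in N_F$ by (FR4). So assume $\gamma \in K \setminus K_0$, together with $\alpha + \gamma \in N_F$ and $\beta - \gamma \in N_F$ (where the latter means $\beta + \epsilon\gamma \in K_0$).

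The idea is to apply (FR6$''$) with the assignments
\[
\kappa = \alpha, \qquad \mu = \gamma, \qquad \lambda_1 = 1, \qquad \lambda_2 = \epsilon\beta.
\]
Then $\mu \notin K_0$ by assumption; the first ``input'' hypothesis reads $\kappa + \mu\lambda_1 = \alpha + \gamma \in K_0$, which holds; and the second, $\mu + \lambda_2 = \gamma + \epsilon\beta$, is obtained by multiplying $\beta + \epsilon\gamma \in K_0$ by $\epsilon$ and using $\epsilon^2 = 1$, so it lies in $K_0$ because $K_0$ is an ideal. The conclusion of (FR6$''$) is then
\[
\kappa + \epsilon\lambda_1\lambda_2 \;=\; \alpha + \epsilon^2\beta \;=\; \alpha + \beta \;\in\; K_0 = N_F,
\]
which is exactly what (SF) demands.

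There is no serious obstacle here; the only subtleties are (i) recognizing that $\epsilon$ in a simplified fuzzy ring is the tract's $-1$ and that $\epsilon^2 = 1$, so that rewriting $\beta - \gamma = \beta + \epsilon\gamma$ and then multiplying by $\epsilon$ converts one of the two given null relations into the shape $\mu + \lambda_2 \in K_0$ required by (FR6$''$), and (ii) choosing $\lambda_1 = 1$ so that the output $\kappa + \epsilon\lambda_1\lambda_2$ collapses to $\alpha + \beta$. A brief remark at the end can be included to note that the same proof makes no use of any restriction on $\|\alpha + \beta\|$, so in fact (FR6$''$) implies the stronger (SF) rather than merely (MSF).
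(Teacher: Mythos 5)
Your proof is correct and follows essentially the same route as the paper's: both dispose of $\gamma=0$ via (FR4) and then make a single application of (FR6$''$), the only difference being the cosmetic choice of substitution (the paper takes $\kappa=\alpha$, $\mu=\epsilon\gamma$, $\lambda_1=\epsilon$, $\lambda_2=\beta$, so that both hypotheses of (FR6$''$) appear verbatim, while you take $\mu=\gamma$, $\lambda_1=1$, $\lambda_2=\epsilon\beta$ and first multiply $\beta+\epsilon\gamma\in K_0$ by $\epsilon$). Your closing remark that no bound on $\|\alpha+\beta\|$ is used, so (FR6$''$) yields full (SF), matches the statement being proved.
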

 
\begin{proof}
Given a simplified fuzzy ring $K$, let $F_K$ denote the tract associated to it. If $\alpha + \gamma$ and $\beta - \gamma$ are in $N_{F_K}$, then in terms of the fuzzy ring we have $\alpha + \gamma, \beta - \gamma \in K_0$. If $\gamma = 0$, then $\alpha + \beta \in K_0$ by (FR4). If $\gamma \notin K_0$, let $\kappa = \alpha$, $\mu = \epsilon\cdot \gamma$, $\lambda_1 = \epsilon $ and $\lambda_2 = \beta$. Then $\alpha + \beta \in K_0$ by (FR6$'')$. In the language of tracts, this means precisely that $\alpha + \beta \in N_{F_K}$.
\end{proof}

Combining Proposition~\ref{prop:FR6''impliesSF} with Theorem~\ref{thm:mainthm}, we recover the following special case\footnote{Dress and Wenzel prove in \cite[Theorem 2.7]{Dress-Wenzel92} that this remains true with ``simplified fuzzy ring'' replaced by ``weakly distributive fuzzy ring''.} of \cite[Theorem 2.7]{Dress-Wenzel92}:
\begin{thm*}
A simplified fuzzy ring which satisfies (FR6$''$) is perfect. 
\end{thm*}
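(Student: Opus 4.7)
The plan is to derive this theorem as an immediate corollary of the two principal results preceding it in this section. The argument is essentially a chaining of implications: first pass from the fuzzy ring world to the tract world via the identification of \cite[Theorem 2.21]{Baker-Lorscheid18}; then invoke Proposition~\ref{prop:FR6''impliesSF} to upgrade (FR6$''$) to the strong fusion axiom (SF); and finally apply Theorem~\ref{thm:mainthm} to conclude perfection.

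More concretely, let $K$ be a simplified fuzzy ring satisfying (FR6$''$) and let $F_K$ denote the associated tract. By Proposition~\ref{prop:FR6''impliesSF}, $F_K$ satisfies (SF), so by Theorem~\ref{thm:mainthm}, $F_K$ is a perfect tract, meaning that every $F_K$-matroid has orthogonal vectors and covectors. It only remains to verify that this perfection statement at the level of the tract $F_K$ coincides with the perfection statement Dress and Wenzel make at the level of $K$. This is really a translation issue, handled by the fact that the functor from simplified fuzzy rings to tracts is fully faithful, so that the notions of $K$-matroid, $K$-circuit, $K$-cocircuit, and the orthogonality pairing defined in terms of $\cdot$ and $K_0$ correspond under the identification to the analogous notions over $F_K$ defined in terms of multiplication and $N_{F_K}$.

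The main (and really only) obstacle is the bookkeeping in this last translation step. Since we defined $N_{F_K}$ precisely so that $\alpha\in N_{F_K}$ iff $\alpha\in K_0$ when $\alpha$ is written as a formal sum in $\NN[K^\times]$, the translation is straightforward, but it must be done carefully to ensure that the Dress--Wenzel orthogonality condition on vectors and covectors matches $X\cdot Y\in N_{F_K}$ in the tract setting. Once this dictionary is in place, the theorem follows with no further work beyond combining Proposition~\ref{prop:FR6''impliesSF} and Theorem~\ref{thm:mainthm}, which is exactly what the paper's preamble to the theorem announces.
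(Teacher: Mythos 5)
Your proposal is correct and is essentially the paper's own argument: the theorem is obtained precisely by combining Proposition~\ref{prop:FR6''impliesSF} with Theorem~\ref{thm:mainthm}, using the identification of simplified fuzzy rings with a full subcategory of tracts from \cite{Baker-Lorscheid18} (and \cite[Appendix B]{Baker-Bowler19}) to transport the notion of perfection. Your extra remark about matching the Dress--Wenzel orthogonality bookkeeping with $N_{F_K}$ is exactly the translation the paper leaves implicit in that identification, so there is nothing further to add.
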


The tract $\P'$ appearing in Example~\ref{ex:MSFbutnotSF} comes from 
a simplified fuzzy ring $K$.
Since $\P'$ does not satisfy (SF), Proposition~\ref{prop:FR6''impliesSF} implies that $K$ does not satisfy (FR6$''$). On the other hand, Theorem~\ref{thm:strongmainthm} applies to $\P'$ since $\P'$ {\bf does} satisfy (MSF). This shows that Theorem~\ref{thm:strongmainthm} is strictly stronger than \cite[Theorem 2.7]{Dress-Wenzel92}, at least when we restrict the latter to simplified fuzzy rings. 

\begin{small}
\bibliographystyle{plain}
\bibliography{matroid}
\end{small}

\end{document}